%\pdfoutput=1 %may be required for arXiv
\documentclass[11pt]{amsart}

\usepackage{epsfig,overpic,comment}
\usepackage[usenames,dvipsnames,svgnames,table]{xcolor}
\usepackage[hyphens]{url}
\usepackage[pagebackref,linktocpage=true,colorlinks=true,linkcolor=Blue,citecolor=BrickRed,urlcolor=RoyalBlue]{hyperref}
\usepackage[msc-links,abbrev]{amsrefs}
\usepackage{amsmath,amsthm,amssymb,marginnote}
\usepackage{enumerate}
\usepackage[T1]{fontenc}
\usepackage{cancel}

\textwidth=5.75in
\textheight=8in
\oddsidemargin=0.375in
\evensidemargin=0.375in
\topmargin=.5in

\newtheorem{theorem}{Theorem}[section]
\newtheorem{corollary}[theorem]{Corollary}
\newtheorem{lemma}[theorem]{Lemma}
\newtheorem{proposition}[theorem]{Proposition}

\theoremstyle{definition}
\newtheorem{note}[theorem]{Note}

\newcommand{\be}{\begin{equation}}
\newcommand{\ee}{\end{equation}}
\newcommand{\ol}{\overline}
\newcommand{\ul}{\underline}
\newcommand{\R}{\mathbf{R}}
\newcommand{\C}{\mathcal{C}}
\newcommand{\G}{\mathcal{G}}
\newcommand{\M}{\mathcal{M}}
\newcommand{\X}{\mathcal{X}}
\newcommand{\ff}{\mathrm{I\!I}}
\newcommand{\CAT}{\textup{CAT}}

\renewcommand{\S}{\mathbf{S}}
\renewcommand{\epsilon}{\varepsilon}
\renewcommand{\tilde}{\widetilde}
\renewcommand{\leq}{\leqslant}
\renewcommand{\geq}{\geqslant}

\DeclareMathOperator{\inte}{int}

%definition of \arc
\makeatletter
\DeclareFontFamily{U}{tipa}{}
\DeclareFontShape{U}{tipa}{m}{n}{<->tipa10}{}
\newcommand{\arc@char}{{\usefont{U}{tipa}{m}{n}\symbol{62}}}%

\newcommand{\arc}[1]{\mathpalette\arc@arc{#1}}

\newcommand{\arc@arc}[2]{%
  \sbox0{$\m@th#1#2$}%
  \vbox{
    \hbox{\resizebox{\wd0}{\height}{\arc@char}}
    \nointerlineskip
    \box0
  }%
}
\makeatother

\begin{document}
\setlength{\baselineskip}{1.2\baselineskip}

\title[Convexity and rigidity of hypersurfaces] 
{Convexity and rigidity of hypersurfaces\\ in Cartan-Hadamard manifolds}

\author{Mohammad Ghomi}
\address{School of Mathematics, Georgia Institute of Technology,
Atlanta, GA 30332}
\email{ghomi@math.gatech.edu}
\urladdr{www.math.gatech.edu/~ghomi}

%\vspace*{-0.75in}
\begin{abstract}
We show that in Cartan-Hadamard manifolds $M^n$, $n\geq 3$, closed infinitesimally convex hypersurfaces $\Gamma$  bound convex flat regions if  curvature of  $M^n$ vanishes on tangent planes of $\Gamma$. This encompasses Chern-Lashof-Sacksteder characterization of compact convex hypersurfaces in Euclidean space, and some results of Greene-Wu-Gromov on rigidity of Cartan-Hadamard manifolds. It follows that closed  simply connected surfaces  in $M^3$ with minimal total absolute curvature bound Euclidean convex bodies, as stated by Gromov in 1985. The proofs employ the Gauss-Codazzi equations, a generalization of Schur comparison theorem to $\CAT(k)$ spaces, and other techniques from Alexandrov geometry outlined by  Petrunin.
\end{abstract}

\date{\today \,(Last Typeset)}
\subjclass[2010]{Primary: 53C20, 58J05; Secondary: 53C44, 52A15.}
\keywords{$\CAT(k)$ space,  Hyperbolic space, Tight surface, Total absolute curvature,  Gap theorem,  Cauchy arm lemma, Unfolding, Reshetnyak majorization, Kirszbraun extension.}
\thanks{The author was supported by NSF grant DMS-2202337.
}

\maketitle

%\tableofcontents

%%%%%%%%%%%%%%%%%%%%%%%%%%%%%%%%%%%%%%%%%%
\section{Introduction}
%%%%%%%%%%%%%%%%%%%%%%%%%%%%%%%%%%%%%%%%%%

A $\CAT^n(k_{\leq 0})$ manifold $M$ is a metrically complete simply connected Riemannian $n$-space with curvature $K_M\leq k\leq 0$, and \emph{locally convex} boundary $\partial M$. The last condition means that, when $\partial M\neq\emptyset$,  the second fundamental form of $\partial M$ is positive semidefinite with respect to the outward normal. When $\partial M=\emptyset$, $M$ is known as  a \emph{Cartan-Hadamard} manifold. A subset of $M$  is \emph{convex} if it contains the geodesic connecting every pair of its points, and is  called a \emph{convex body} if it also has nonempty interior.  A hypersurface $\Gamma$  in $M$ is \emph{convex} if it  bounds a convex body. We say $\Gamma$ is   \emph{infinitesimally convex}  if  its  principal curvatures do not assume opposite signs at any point, or its sectional curvatures $K_\Gamma\geq K_M$ on every tangent plane. Chern-Lashof-Sacksteder \cites{chern-lashof:tight1, chern-lashof:tight2,sacksteder1960} and do Carmo-Warner \cite{docarmo-warner1970}  showed, respectively, that infinitesimally convex closed hypersurfaces immersed in Euclidean space $\R^n$ or hyperbolic space $\textbf{H}^n$, $n\geq 3$, are convex. We extend these results to $\CAT^n(k_{\leq 0})$ manifolds. A region $X$ of $M$  is  \emph{$k$-flat} if $K_M\equiv k$ on $X$. 

\begin{theorem}\label{thm:main}
Let $\Gamma$ be a closed  infinitesimally convex $\C^{n}$  hypersurface immersed in a $\CAT^n(k_{\leq 0})$ manifold $M$, $n\geq 3$. Suppose that $K_{M}\equiv k$ on  tangent planes of $\Gamma$. Then $\Gamma$  bounds a $k$-flat convex body. In particular $\Gamma$ is an embedded sphere.
\end{theorem}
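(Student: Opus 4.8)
The plan is to convert the hypotheses into purely metric data about $\Gamma$ via the Gauss equation, then establish convexity and embeddedness by Alexandrov‑geometric comparison arguments, and finally read off the flatness from the equality case of those comparisons.

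First I would record what the Gauss equation and infinitesimal convexity give. After fixing a local unit normal $\nu$ along which the principal curvatures $\kappa_1,\dots,\kappa_{n-1}$ are nonnegative, the shape operator $S$ is positive semidefinite, and for an orthonormal pair $X,Y$ tangent to $\Gamma$ the Gauss equation reads $K_\Gamma(X,Y)=K_M(X,Y)+\langle SX,X\rangle\langle SY,Y\rangle-\langle SX,Y\rangle^2$. The last two terms form the determinant of a principal $2\times 2$ submatrix of a positive semidefinite matrix, hence are $\geq 0$, while $K_M(X,Y)=k$ by hypothesis; therefore the induced length metric on $\Gamma$ has sectional curvature $\geq k$, i.e. $\Gamma$ is an Alexandrov space of curvature $\geq k$. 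At the same time $S\succeq 0$ means that near each point $\Gamma$ lies weakly on one side of the totally geodesic hypersurface tangent to it, so $\Gamma$ is \emph{locally convex}. I would also note that $M$, being metrically complete, simply connected, with $K_M\leq k\leq 0$ and locally convex boundary, is a $\CAT(k)$ space, and that the immersion $f\colon\Gamma\to M$ is $1$‑Lipschitz once $\Gamma$ is given its induced length metric (intrinsic distances on $\Gamma$ dominate ambient ones, while lengths of smooth curves agree). Codazzi's equation would be kept in reserve for the rigidity step.

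The central step is promoting local convexity to global convexity together with embeddedness. In $\R^n$ and $\mathbf{H}^n$ this is the classical van Heijenoort / do Carmo–Warner phenomenon, proved with height functions and horospheres; in an arbitrary $\CAT^n(k_{\leq 0})$ manifold those tools are gone, and I would argue instead through the Alexandrov‑geometric techniques flagged in the abstract. Concretely: unfold geodesics of $\Gamma$ into the model plane $\mathbb{M}^2_k$ and use the generalization of Schur's comparison theorem to $\CAT(k)$ spaces to control how the $1$‑Lipschitz map $f$ distorts chords of such geodesics; apply Reshetnyak majorization to geodesic loops of $\Gamma$ to obtain convex majorizing disks in $\mathbb{M}^2_k$, and invoke the Cauchy arm lemma to pin these majorizations down where $\Gamma$ is geodesically convex. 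Feeding this back shows that the local supporting hyperplanes can be propagated coherently: $f$ has no self‑intersections, $f(\Gamma)$ separates $M$, and the side into which all supporting tangent hyperplanes point is a convex body $D$ with $\partial D=f(\Gamma)$; a Kirszbraun‑type extension of the boundary identification exhibits $D$ as the image of a convex domain, and in particular $\Gamma$ is an embedded topological sphere.

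Finally, flatness of $D$ comes from the equality case. Since $D$ is convex in the $\CAT(k)$ manifold $M$, the intrinsic metric of $D$ agrees with the ambient one, so $D$ is itself $\CAT(k)$; on the other hand $\partial D=\Gamma$ has curvature $\geq k$ by the Gauss computation above, and — crucially — the hypothesis $K_M\equiv k$ on tangent planes of $\Gamma$ is exactly what saturates every inequality used in the unfolding and majorization estimates. Tracking where equality is forced (Reshetnyak majorizations of geodesic triangles in $\Gamma$ become isometric, so triangles in $D$ admit comparison triangles in $\mathbb{M}^n_k$ with the same side lengths and angles) yields, together with the Gauss–Codazzi relations, that $D$ has constant curvature $k$; thus $\Gamma$ bounds a $k$‑flat convex body, which is a topological ball, completing the proof. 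I expect the main obstacle to be the middle step: for a merely \emph{immersed} hypersurface with only one‑sided infinitesimal convexity, excluding nontrivial ``wrapping'' in an arbitrary Cartan–Hadamard ambient forces one to discard the Euclidean and hyperbolic Morse‑theoretic arguments entirely and to upgrade the unfolding / Schur / Reshetnyak estimates from local to global statements, and it is also there that one must be careful about the orientation ambiguity hidden in ``infinitesimally convex.''
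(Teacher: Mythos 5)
Your proposal correctly flags the tools that appear in the paper (Schur comparison in $\CAT(k)$, Reshetnyak majorization, Cauchy arm, Kirszbraun extension), and the local Gauss computation showing $K_\Gamma\geq k$ is sound. But the architecture you outline is genuinely different from the paper's, and at the one step you yourself acknowledge as ``the main obstacle'' the argument has a real gap, not just a rough edge.

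The first gap is at the very start: you pass from infinitesimal convexity to ``the shape operator $S$ is positive semidefinite'' by \emph{fixing a local normal}. That is exactly the non-trivial point. Infinitesimal convexity only says that at each point the principal curvatures do not take opposite signs; it does not supply a \emph{globally coherent} normal with respect to which $S\succeq 0$, because the preferred orientation can a priori flip across the set where $\ff_\Gamma$ degenerates. Resolving this is precisely Sacksteder's contribution (Lemma~\ref{lem:sacksteder}), and it uses the $\C^n$ regularity hypothesis in an essential way to control the flat set; your proposal does not engage with this at all, and without it ``$\Gamma$ is locally convex'' is unproved.

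The second and more serious gap is the middle step. You propose to establish global convexity and embeddedness of the immersion $\Gamma\looparrowright M$ by direct unfolding/Schur/Reshetnyak/arm-lemma arguments in $M$, but you never say how these chord estimates actually exclude self-intersections or produce a convex body in an arbitrary Cartan--Hadamard ambient; the sentence ``feeding this back shows that the local supporting hyperplanes can be propagated coherently'' is an assertion, not an argument. The paper avoids this head-on attack entirely. It first uses Lemma~\ref{lem:RXY} to show $R(X,Y)N=0$ on tangent vectors of $\Gamma$, so the Gauss--Codazzi system for $\Gamma$ in $M$ is \emph{identical} to the one it would satisfy in $\M^n_k$; the fundamental theorem of hypersurfaces (Proposition~\ref{prop:embedding}) then produces an isometric immersion $\Gamma'\subset\M^n_k$ with the same second fundamental form. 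Convexity is settled in the model space by Sacksteder/do Carmo--Warner (where it is a classical theorem), transferred back to $M$ as \emph{local} convexity, and then upgraded to global convexity and embeddedness by Alexander's theorem in Cartan--Hadamard manifolds \cite{alexander1977} --- a citation your proposal is missing and which your Alexandrov machinery does not replace. Only after convexity of both $\Gamma$ and $\Gamma'$ is in hand do Schur/Reshetnyak/Kirszbraun enter, not to prove convexity, but to show the isometry $\Gamma\to\Gamma'$ is an \emph{extrinsic} isometry and extends to the convex bodies (Lemmas~\ref{lem:C},~\ref{lem:new} and Proposition~\ref{prop:main}); $k$-flatness then comes for free because $C$ is isometric to a body in $\M^n_k$, with no ``tracking the equality case'' needed.

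In short: your proposal skips the reduction to the model space, does not supply the local-to-global convexity mechanism (Sacksteder plus Alexander), and uses the Alexandrov toolkit for the wrong stage of the argument. To repair it you would essentially have to reconstruct the paper's Proposition~\ref{prop:embedding} and Lemma~\ref{lem:sacksteder} and insert Alexander's theorem.
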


If $K_M\equiv k$ outside a compact set  $X$ in  $M$, and $\partial M=\emptyset$, then letting $\Gamma$ in the above theorem be a sphere enclosing $X$ yields that $K_M\equiv k$ everywhere. Thus Theorem \ref{thm:main} also extends some of the ``gap theorems'' \cite{schroeder-ziller1990,seshadri2009} first obtained by Greene-Wu \cite{greene-wu1982} and Gromov \cite{ballmann-gromov-schroeder}*{Sec. 5}. 
In the case where $n=3$ and $\Gamma$ is \emph{strictly convex}, i.e., the second fundamental form $\ff_\Gamma$ is  positive definite, the above result 
was established  by the author and Spruck \cite{ghomi-spruck2023}, generalizing earlier work of Schroeder-Strake \cite{schroeder-strake1989a} for $k=0$. Kleiner \cite[p. 42]{kleiner1992} had also observed a version of the above theorem when $n=3$ and $\Gamma$ has constant mean curvature, via a result of Schroeder-Ziller \cite[Thm. 7]{schroeder-ziller1990} which again requires strict convexity and also negative curvature.
Next result is an intrinsic version of Theorem \ref{thm:main}. 
Let $\M^n_k$ be the \emph{model space}, or complete simply connected $n$-manifold, of constant curvature $k\leq 0$.

\begin{theorem}\label{thm:main2}
Let $M^n$, $n\geq 3$, be a compact simply connected manifold with  infinitesimally convex $\C^{n}$ boundary $\Gamma$, and curvature $K_{M}\leq k\leq 0$ with $K_{M}\equiv k$ on  tangent planes of $\Gamma$. Suppose that each component  of $\Gamma$ is simply connected and contains a point where a principal curvature with respect to the outward normal is positive. Then $M$ is isometric to a convex body in $\M^n_k$. In particular $M$ is homeomorphic to a ball.
\end{theorem}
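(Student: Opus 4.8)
The plan is to deduce Theorem~\ref{thm:main2} from Theorem~\ref{thm:main} by enlarging $M$ to a Cartan--Hadamard manifold in which $\Gamma=\partial M$ becomes an immersed hypersurface. The key technical input is an extension lemma: since $\Gamma$ is infinitesimally convex (indeed $\ff_\Gamma\geq 0$ with respect to the outward normal) and $K_M\equiv k$ on its tangent planes, the metric of $M$ extends across $\Gamma$ to a complete simply connected Riemannian $n$-manifold $Y$ without boundary, with $K_Y\leq k\leq 0$, such that the inclusion $M\hookrightarrow Y$ is a smooth isometric embedding whose image is the closure of a bounded component of $Y\setminus\Gamma$. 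Concretely, one glues to $\Gamma$ a half-infinite collar $\Gamma\times[0,\infty)$ in Fermi coordinates: writing the metric near $\Gamma$ as $dt^2+g_t$ with $g_0=g_\Gamma$ and $\partial_t g_t|_0$ equal to twice the outward second fundamental form (hence $\geq 0$), one continues the path $g_t$ to $t\in[0,\infty)$ keeping the associated sectional curvatures $\leq k$ and stabilizing $g_t$ to a fixed warped product of curvature $\leq k$ as $t\to\infty$. Matching the full $t$-jet of $g_t$ at $t=0$ makes the resulting metric on $Y$ smooth, so that $K_Y\equiv k$ on the tangent planes of $\Gamma$ (being the one-sided limit of $K_M$ there), and $Y$ is simply connected because it deformation retracts onto $M$. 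I expect this extension step to be the main obstacle: one must keep the curvature $\leq k$ on the \emph{whole} collar while matching $M$ smoothly along $\Gamma$, which leaves no room precisely in the critical directions where $K_M=k$; and it is here that the hypotheses that each component of $\Gamma$ is simply connected and has a point of positive principal curvature are used, as they exclude the boundary components (such as flat tori) past which no such extension exists and which could never bound a ball.

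Granting $Y$, the hypersurface $\Gamma$ is a closed infinitesimally convex $\C^{n}$ hypersurface immersed in the $\CAT^n(k_{\leq 0})$ manifold $Y$ with $K_Y\equiv k$ on its tangent planes, so Theorem~\ref{thm:main} applies: $\Gamma$ bounds a $k$-flat convex body $C\subset Y$, and $\Gamma$ is an embedded sphere. Next I would identify $C$ with $M$. Being Cartan--Hadamard, $Y$ is diffeomorphic to $\R^n$, so the embedded sphere $\Gamma$ separates $Y$ into one bounded and one unbounded complementary component. The convex body $C$ is compact: if $C$ were the closure of the unbounded component, then through any interior point of the bounded region we could shoot a complete geodesic of $Y$ which eventually lies in $C$ in both directions, contradicting convexity of $C$; hence $C$ is the closure of the bounded component. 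By construction of $Y$, the image of $M$ admits the same description (the collar is noncompact while $M$ is compact), so $M=C$. Thus $M$ is a convex body in $Y$ on which $K_Y\equiv k$, and since the embedding $M\hookrightarrow Y$ is isometric, $M$ itself has constant sectional curvature $k$.

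Finally I would show that a simply connected convex body of constant curvature $k\leq 0$ is isometric to a convex body in $\M^n_k$. Let $D\colon M\to\M^n_k$ be the developing map of the constant-curvature structure; it is defined on all of $M$ (which is simply connected), extends smoothly to $\Gamma=\partial M$, and is a Riemannian local isometry. Given $p,q\in M$, the minimizing $Y$-geodesic from $p$ to $q$ lies in the convex set $M$ and realizes $d_M(p,q)$, and $D$ carries it to a geodesic segment of $\M^n_k$; since $\M^n_k$ is uniquely geodesic for $k\leq 0$, that segment is minimizing, whence $d_{\M^n_k}(D(p),D(q))$ equals its length, which is $d_M(p,q)$. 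Therefore $D$ is distance-preserving, hence injective, hence (being also a local diffeomorphism) a diffeomorphism onto its image $D(M)$, which is compact, has nonempty interior, and is convex because the $\M^n_k$-minimizing geodesic between any two of its points is the $D$-image of a $Y$-geodesic contained in $M$. Thus $D(M)$ is a convex body in $\M^n_k$ with $\C^{n}$ boundary $D(\Gamma)$, and $D$ is an isometry of $M$ onto it. Since any convex body in $\M^n_k\cong\R^n$ is homeomorphic to a closed ball (via the radial homeomorphism from an interior point), $M$ is homeomorphic to a ball, completing the proof.
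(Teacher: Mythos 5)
The plan hinges on an unproved and genuinely nontrivial \emph{extension lemma}: that $(M,g)$ can be extended across $\Gamma$ to a complete boundaryless Cartan--Hadamard manifold $Y$ with $K_Y\leq k$ everywhere and matching $t$-jets along $\Gamma$. You flag this yourself as ``the main obstacle,'' and rightly so---but it is more than an obstacle, it is a gap with no sketch of a proof. Writing the metric as $dt^2+g_t$ in Fermi coordinates and ``continuing $g_t$ keeping the sectional curvatures $\leq k$'' is not a construction: the sectional curvature of such a metric is a second-order, highly coupled PDE constraint on $g_t$, and the requirement that the extension match the full jet of the given metric at $t=0$ while staying under the curvature bound (with equality holding on the tangent planes at $t=0$, so no slack in the critical directions) is exactly the sort of rigidity problem the rest of the paper is trying to circumvent. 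Your suggestion that the hypotheses (simple connectivity of each component of $\Gamma$, positive principal curvature somewhere) are what make this extension possible is speculation that does not match how the paper actually uses them.

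The paper avoids the extension altogether, and this is the idea you are missing: the definition of a $\CAT^n(k_{\leq 0})$ manifold in this paper \emph{allows boundary}, provided the boundary is locally convex. The proof therefore works entirely inside $M$: Proposition~\ref{prop:embedding} (applicable because each $\Gamma_i$ is simply connected) isometrically immerses $\Gamma_i$ into $\M^n_k$ preserving $\ff_{\Gamma_i}$; do~Carmo--Warner shows the image is convex, hence $\ff_{\Gamma_i}$ is semidefinite with respect to \emph{some} normal, and the hypothesis of a positive principal curvature with respect to the \emph{outward} normal pins the sign so that $\Gamma$ is locally convex outward. At that point $M$ is itself a compact $\CAT^n(k_{\leq 0})$ manifold, Lemma~\ref{lem:ball} gives that $M$ is convex (a convex body in itself) and a ball, so $\Gamma$ is connected, and Proposition~\ref{prop:main} (Schur comparison + Reshetnyak majorization + Lang--Schroeder/Kirszbraun) directly yields the isometry with the convex body in $\M^n_k$ bounded by $f(\Gamma)$. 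Your third step (developing map once constant curvature is established) is a fine alternative to the last invocation of Proposition~\ref{prop:main}, but it only becomes available after Theorem~\ref{thm:main} has been applied, which in your scheme requires the unavailable ambient manifold $Y$.
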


Conditions in the second sentence of the last theorem ensure that $M$ is not a tubular neighborhood of a closed geodesic, or the complement of a small open ball in a compact space form; see \cite{hass1994} for other examples of nonpositively curved manifolds with concave boundary.   Theorem \ref{thm:main} has the following application.
Let $GK:=\det(\ff_\Gamma)$ denote the \emph{Gauss-Kronecker} curvature of a surface $\Gamma$ in a Riemannian $3$-manifold. The \emph{total curvature} and \emph{total absolute curvature} of $\Gamma$ are given respectively by
$$
\mathcal{G}(\Gamma):=\int_\Gamma GK,\quad\quad\text{and}\quad\quad\tilde{\mathcal{G}}(\Gamma):=\int_\Gamma |GK|.
$$
Hypersurfaces which minimize $\tilde{\mathcal{G}}$ in a topological class are called \emph{tight} \cite{cecil-chern1985}, and have been studied extensively in $\R^n$ since Alexandrov \cite{alexandrov1938}. Let $|\Gamma|$ denote the area of $\Gamma$. 

\begin{corollary}\label{cor:main2}
Let $\Gamma$ be a closed simply connected  $\C^3$ surface immersed in a $\CAT^3(k_{\leq 0})$ manifold. Then
\be\label{eq:G2}
\tilde\G(\Gamma)\geq 4\pi -k|\Gamma|,
\ee
with equality only if $\Gamma$ bounds a $k$-flat convex body.
\end{corollary}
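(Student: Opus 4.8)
The plan is to derive the inequality \eqref{eq:G2} from the Gauss-Bonnet theorem applied to the intrinsic metric of $\Gamma$, and then to use Theorem \ref{thm:main} to pin down the equality case. First I would invoke the Gauss equation for a surface in a $3$-manifold: at each point of $\Gamma$ the intrinsic (sectional) curvature $K_\Gamma$ equals $K_M(T_p\Gamma) + GK(p)$, where $K_M(T_p\Gamma)$ is the ambient sectional curvature along the tangent plane. Since the ambient space is $\CAT^3(k_{\leq 0})$, we have $K_M(T_p\Gamma)\leq k\leq 0$, hence $GK = K_\Gamma - K_M(T_p\Gamma) \geq K_\Gamma - k$ pointwise. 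Integrating over $\Gamma$ and using $|GK|\geq GK$ gives $\tilde\G(\Gamma)\geq \int_\Gamma GK \geq \int_\Gamma K_\Gamma - k|\Gamma|$. Because $\Gamma$ is closed and simply connected, it is a topological sphere, so Gauss-Bonnet yields $\int_\Gamma K_\Gamma = 2\pi\chi(\Gamma) = 4\pi$. This establishes \eqref{eq:G2}.

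Next I would analyze the equality case. Equality in \eqref{eq:G2} forces equality in both of the inequalities used: $\int_\Gamma |GK| = \int_\Gamma GK$, which means $GK\geq 0$ almost everywhere on $\Gamma$, and (since $GK$ is continuous) $GK\geq 0$ everywhere; and $\int_\Gamma K_M(T_p\Gamma) = k|\Gamma|$ with $K_M(T_p\Gamma)\leq k$, which forces $K_M(T_p\Gamma)\equiv k$ on all tangent planes of $\Gamma$. The condition $GK = \det(\ff_\Gamma)\geq 0$ says exactly that the principal curvatures of $\Gamma$ do not have opposite signs at any point, i.e., $\Gamma$ is infinitesimally convex. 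Thus in the equality case all the hypotheses of Theorem \ref{thm:main} are satisfied (with $n=3$: $\Gamma$ is a closed infinitesimally convex $\C^3$ hypersurface immersed in a $\CAT^3(k_{\leq 0})$ manifold with $K_M\equiv k$ on its tangent planes), and we conclude that $\Gamma$ bounds a $k$-flat convex body.

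The main obstacle here is essentially nil on the inequality side — it is a direct consequence of the Gauss equation, the curvature bound, and Gauss-Bonnet — so the only real content is making sure the chain of equalities is correctly unwound to verify the precise hypotheses of Theorem \ref{thm:main}. One subtlety worth a careful remark is that $\Gamma$ is only immersed, not embedded, so $K_\Gamma$ and $GK$ and $|\Gamma|$ all refer to the pulled-back metric on the abstract surface; Gauss-Bonnet applies to that abstract closed simply connected surface without change, and the Gauss equation is a pointwise identity that is insensitive to the immersion being non-embedding, so nothing breaks. I would also note in passing that the statement does not assert equality \emph{if and only if} $\Gamma$ bounds a convex body, only the "only if" direction, so there is no need to separately verify that a $k$-flat convex body achieves equality (though it does, since for such a body the Gauss equation gives $GK = K_\Gamma - k\geq -k\geq 0$ and Gauss-Bonnet then gives equality).
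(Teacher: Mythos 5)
Your proof is correct and follows essentially the same route as the paper: Gauss's equation plus the curvature bound $K_M\leq k$ gives $GK\geq K_\Gamma-k$, Gauss-Bonnet for the simply connected surface gives $\int_\Gamma K_\Gamma=4\pi$, and the equality case is unwound to recover the two hypotheses (infinitesimal convexity and $K_M\equiv k$ on tangent planes) needed to invoke Theorem \ref{thm:main}. Your added remarks about continuity of $GK$ and about the immersed-versus-embedded point are fine but not substantively different from the paper's argument.
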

\begin{proof}
Let $M$ be the ambient space, and $K_\Gamma$ denote the sectional curvature of $\Gamma$.
By Gauss' equation, at every point $p\in \Gamma$,
\begin{equation}\label{eq:gauss}
GK(p)=K_{\Gamma}(p)-K_M (T_p \Gamma)\geq K_{\Gamma}(p)-k,
\end{equation}
where $T_p\Gamma$ is the tangent plane of $\Gamma$ at  $p$.
 Since $\Gamma$ is simply connected, $\int_\Gamma K_\Gamma=4\pi$ by Gauss-Bonnet theorem. Thus 
\begin{equation}\label{eq:tildeGG}
\tilde\G(\Gamma)\geq \G(\Gamma)=4\pi- \int_\Gamma K_M (T_p\Gamma)\geq 4\pi -k|\Gamma|.
\end{equation}
Equality in \eqref{eq:G2} forces equalities in \eqref{eq:tildeGG}.
In particular $\tilde{\mathcal{G}}(\Gamma)= \mathcal{G}(\Gamma)$, which yields  $GK\geq 0$ everywhere. So $\Gamma$ is infinitesimally convex. Furthermore $\int_\Gamma K_M (T_p\Gamma)= k|\Gamma|$, which yields $K_M (T_p\Gamma)=k$  for all $p\in\Gamma$. Now Theorem \ref{thm:main} completes the proof. 
\end{proof}

For $\Gamma$ strictly convex, the last result was established in \cite{ghomi-spruck2023}*{Cor. 1.2}. 
For surfaces in $\mathbf{H}^3$, the weaker inequality $\tilde\G(\Gamma)\geq 4\pi +|\Gamma_0|$, where $\Gamma_0$ denotes the boundary of the convex hull of $\Gamma$, 
had been known earlier \cite{langevin-solanes2003}*{Prop. 2}.  For surfaces in $\R^3$, Corollary \ref{cor:main2} dates back to Chern-Lashof \cites{chern-lashof:tight1, chern-lashof:tight2}, who showed that $\tilde\G(\Gamma)\geq 2\pi(2+2g)$, where $g$ is the topological genus of $\Gamma$. In 1966 Willmore-Saleemi \cite{willmore-saleemi} conjectured that the Chern-Lashof inequality holds in $\CAT^3(0)$ manifolds;  however, Solanes \cite{solanes2007} constructed closed surfaces $\Gamma$ in $\mathbf{H}^3$ of every genus $g\geq 1$   with $\tilde{\mathcal{G}}(\Gamma)\approx 8\pi$. In these examples $|\Gamma|\approx2\pi(2g+2)$, which shows that \eqref{eq:G2} does not hold for $g\geq 1$. So  Corollary \ref{cor:main2} is topologically sharp.

In 1985 Gromov \cite{ballmann-gromov-schroeder}*{p. 66 (b)} proposed that for all closed surfaces $\Gamma$ in a $\CAT^3(0)$ manifold, $\tilde\G(\Gamma)\geq 4\pi$ with equality only if $\Gamma$ bounds a $0$-flat convex body. Corollary \ref{cor:main2} settles this problem for $g=0$. For $g\geq 1$, we  show in Section \ref{sec:higher} that the inequality $\tilde\G(\Gamma)\geq 4\pi$ still holds; however, we cannot prove that $\Gamma$ is convex when equality holds. 

Proof of  Theorem \ref{thm:main} follows an approach suggested by Petrunin \cite{petrunin2022}. We first use the Gauss-Codazzi equations in Section \ref{sec:immersion} to show that $\Gamma$ is isometric to a hypersurface $\Gamma'$ in $\M^n_k$ with the same second fundamental form. It  follows from characterizations of convex hypersurfaces by Sacksteder \cite{sacksteder1960},  and Alexander \cite{alexander1977} that $\Gamma$ and $\Gamma'$ are both convex. Next in Section \ref{sec:schur} we  generalize Schur's comparison theorem to  $\CAT^n(k_{\leq 0})$ manifolds via Reshetnyak’s majorization theorem \cite{reshetnyak1968-english}. This result  is used to show in Section \ref{sec:proof} that the isometry $\Gamma\to\Gamma'$ preserves extrinsic distances. It follows  from the generalization of Kirszbraun's extension theorem by Lang-Schroeder \cite{lang-schroeder1997} that the mapping $\Gamma\to\Gamma'$ extends to an isometry of the convex bodies bounded by these hypersurfaces.  Theorem \ref{thm:main2} is proved similarly.

%%%%%%%%%%%%%%%%%%%%%%%%%%%%%%%%%%%%%%%%%%%%%%%%%
\section{Immersion into  Model Spaces}\label{sec:immersion}
%%%%%%%%%%%%%%%%%%%%%%%%%%%%%%%%%%%%%%%%%%%%%%%%%

Here we use the fundamental theorem of Riemannian hypersurfaces \cites{spivak:v4,dajczer1990} to immerse $\Gamma$ in Theorems \ref{thm:main} and \ref{thm:main2}  into the model space $\M^n_k$.
Let $M^n$ be a Riemannian $n$-manifold with connection $\nabla$ and metric $\langle\cdot,\cdot\rangle$.
The curvature operator of $M$ is given by
$
R(X,Y)Z:=\nabla_X\nabla_Y Z-\nabla_Y\nabla_X Z-\nabla_{[X,Y]}Z,
$
for vector fields $X$, $Y$, $Z$ on $M$. The sectional curvature of $M$ with respect to a plane $\sigma\subset T_p M$ is defined as
$$
K(\sigma)=K(X,Y):=\frac{\langle R(X,Y)Y,X\rangle}{|X\times Y|^2},
$$
where $X$, $Y$  span $\sigma$,  and $|X\times Y|:=(\langle X,X\rangle\langle Y,Y\rangle-\langle X,Y\rangle^2)^{1/2}$.  Let $\Gamma$ be a $\C^2$ immersed hypersurface in $M$.  The \emph{shape operator} and the \emph{second fundamental form} of $\Gamma$ with respect to a (continuous) unit normal vector field $N$ are given by
$$
A(X):=-\nabla_X(N),\quad\quad\text{and}\quad\quad \ff_\Gamma(X,Y):=\langle A(X),Y\rangle,
$$
respectively, for tangent vector fields $X$, $Y$ on $\Gamma$. The \emph{principal curvatures} of $\Gamma$ with respect to $N$ are eigenvalues of $A$. Let $M'$ be another Riemannian $n$-manifold, $f\colon\Gamma\to M'$ be an immersion, and set $\Gamma':=f(\Gamma)$. We say $f$ is \emph{isometric}, or $\Gamma\overset{f}{\to}\Gamma'$ is an isometry, if $\langle X, Y \rangle_M= \langle df(X), df(Y) \rangle_{M'}$; furthermore, $f$ \emph{preserves} $\ff_\Gamma$,  or $\Gamma$ and $\Gamma'$ have \emph{the same second fundamental form}, if  $\ff_{\Gamma}(X,Y)=\ff_{\Gamma'}(df(X), df(Y))$,  with respect to some normal vector fields. 

\begin{proposition}\label{prop:embedding}
Let $\Gamma$ be a simply connected $\C^{\alpha\geq 3}$  hypersurface immersed in  a Riemannian manifold $M^n$, $n\geq 3$. Suppose that for all points $p\in \Gamma$ and planes $\sigma\subset T_p M$, $K_M(\sigma)\leq k\leq 0$  with $K_{M}(\sigma)= k$ if $\sigma\subset T_p\Gamma$. Then there exists a $\C^\alpha$ isometric immersion $\Gamma\to \M^n_k$ which preserves $\ff_\Gamma$.
\end{proposition}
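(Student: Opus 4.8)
The plan is to check that the metric $g$ and second fundamental form $\ff_\Gamma$ induced on $\Gamma$ satisfy the Gauss and Codazzi equations of a hypersurface of $\M^n_k$, and then to quote the fundamental theorem of Riemannian hypersurfaces \cites{spivak:v4,dajczer1990}: on a simply connected base, a metric together with a symmetric $2$-tensor satisfying the Gauss--Codazzi equations of $\M^n_k$ is realized by an isometric immersion into $\M^n_k$ having that tensor as its second fundamental form. Simple connectedness of $\Gamma$ also trivializes the normal line bundle, so a global unit normal $N$, and hence $\ff_\Gamma$ up to sign, is available, and the realization is global.

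Everything therefore reduces to a pointwise linear algebra statement about the curvature operator of $M$ along $\Gamma$. Fix $p\in\Gamma$, put $W:=T_p\Gamma$, and let $N$ be a unit normal at $p$. I would show that the hypotheses $K_M(\sigma)\leq k$ for all $\sigma\subset T_pM$ and $K_M(\sigma)=k$ for $\sigma\subset W$ imply, for all $x,y,z,w\in W$,
\[
\langle R(x,y)z,w\rangle=k\big(\langle x,w\rangle\langle y,z\rangle-\langle x,z\rangle\langle y,w\rangle\big),\qquad \langle R(x,y)z,N\rangle=0 .
\]
The first identity holds because an algebraic curvature tensor on a vector space is determined by the associated sectional curvature function, and $K_M$ is constantly $k$ on every plane of $W$ (which has dimension $n-1\geq 2$). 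For the second, fix orthonormal $x,y\in W$ and set $\phi(t):=K_M\big(x,\cos t\, y+\sin t\, N\big)$; the two spanning vectors stay orthonormal, so $\phi$ is smooth, $\phi(0)=k$, and $\phi\leq k$ by hypothesis, whence $\phi'(0)=0$. Expanding $\phi$ using the symmetries of $R$ turns this into $\langle R(x,N)x,y\rangle=0$, first for orthonormal and then, by multilinearity, for arbitrary $x,y\in W$. Polarizing in $x$ gives $\langle R(x,N)z,y\rangle=-\langle R(z,N)x,y\rangle$ for all $x,y,z\in W$, and feeding this relation, the skew-symmetries of $R$, and the first Bianchi identity against one another forces $3\langle R(x,N)y,z\rangle=0$; thus $\langle R(x,N)y,z\rangle=0$, which is equivalent to $\langle R(x,y)z,N\rangle=0$.

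Granting these two identities, the Gauss equation of $\Gamma$ in $M$ coincides with the Gauss equation of a hypersurface of $\M^n_k$ carrying the pair $(g,\ff_\Gamma)$, and the Codazzi equation $(\nabla_X\ff_\Gamma)(Y,Z)-(\nabla_Y\ff_\Gamma)(X,Z)=\langle R(X,Y)Z,N\rangle$ collapses to $(\nabla_X\ff_\Gamma)(Y,Z)=(\nabla_Y\ff_\Gamma)(X,Z)$, which is exactly the Codazzi equation in a space form, whose ambient curvature has no tangent--normal component. Regularity is then bookkeeping: a $\C^{\alpha}$ submanifold has $g\in\C^{\alpha-1}$ and $\ff_\Gamma\in\C^{\alpha-2}$ --- in particular $\ff_\Gamma\in\C^1$, since $\alpha\geq 3$, so the Codazzi identity is classical --- and the cited versions of the fundamental theorem return a $\C^{\alpha}$ isometric immersion $\Gamma\to\M^n_k$ preserving $\ff_\Gamma$.

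I expect the main obstacle to be the pointwise curvature identity, specifically the vanishing of the mixed terms $\langle R(x,y)z,N\rangle$: the first-variation argument by itself yields only the partial relation $\langle R(x,N)x,y\rangle=0$, and one has to run it through polarization and the Bianchi identity to annihilate the entire tangent--normal component. A lesser point that still needs care is matching the finite-regularity hypotheses in the statement of the fundamental theorem of hypersurfaces, together with the routine verification that a globally consistent unit normal exists, which is where simple connectedness of $\Gamma$ is used a second time.
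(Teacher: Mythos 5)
Your argument is correct and follows essentially the same route as the paper: establish pointwise that $R(X,Y)Z$ has vanishing normal component for tangent $X,Y,Z$ via a first-variation argument, polarization, and the Bianchi identity (the paper's Lemma~\ref{lem:RXY}), then observe that the Gauss--Codazzi equations coincide with those of a hypersurface in $\M^n_k$ and invoke the fundamental theorem of hypersurfaces. The only cosmetic difference is that you rotate within an orthonormal frame ($\cos t\,y+\sin t\,N$), which sidesteps the need to use $k\leq 0$ to compare $k|X_t\times Y|^2$ with $k|X\times Y|^2$ as in the paper's variation $X_t=X+tN$; both reach the same identity $\langle R(X,Y)Y,N\rangle=0$.
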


We always assume that $k\leq 0$ in this work. First we need to show:

\begin{lemma}\label{lem:RXY}
Let $p\in M$ be a point such that $K(\sigma)\leq k$  for all planes $\sigma\subset T_p M$. Suppose that there exists a hyperplane $H\subset T_p M$ such that $K(\sigma)= k$ for all planes $\sigma\subset H$.
 Then for every pair of vectors $X$, $Y\in H$, and orthogonal vector $N$ to $H$, $R(X,Y)N=0$.
\end{lemma}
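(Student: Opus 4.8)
The plan is to subtract off the constant-curvature-$k$ model tensor and then exploit that the $2$-planes contained in $H$ maximize the sectional curvature of this shifted tensor, the maximal value being $0$. Set $S:=R-k\,R_0$, where $R_0(A,B)C:=\langle B,C\rangle A-\langle A,C\rangle B$ is the $(1,3)$-curvature tensor of the unit-curvature model space. Then $S$ is again an algebraic curvature tensor — it inherits the antisymmetries, the pair symmetry, and the first Bianchi identity from $R$ and $R_0$ — and its sectional curvature is $K_S(\sigma)=K(\sigma)-k$; thus $K_S\leq 0$ on all of $T_pM$, while $K_S\equiv 0$ on planes $\sigma\subset H$. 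Since an algebraic curvature tensor is determined by its sectional curvatures (in particular, one whose sectional curvatures all vanish is zero), the restriction of $S$ to $H$ must vanish: $\langle S(X,Y)Z,W\rangle=0$ for all $X,Y,Z,W\in H$. Because $R_0(X,Y)N=0$ when $X,Y\perp N$, one has $R(X,Y)N=S(X,Y)N$, so it suffices to show $S(X,Y)N=0$ for $X,Y\in H$; and since $\langle S(X,Y)N,N\rangle=0$ automatically, everything reduces to proving that $G(X,Y,Z):=\langle S(X,Y)N,Z\rangle$ vanishes for all $X,Y,Z\in H$.

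First I would run a first-variation argument. For orthonormal $X,Y\in H$ the plane $\mathrm{span}(X,Y)$ lies in $H$, so the function $t\mapsto K_S\big(\mathrm{span}(X,Y+tN)\big)$ attains its maximum value $0$ at $t=0$. Differentiating there — the normalizing factor $|X\times(Y+tN)|^2$ has vanishing derivative at $t=0$, and the numerator $\langle S(X,Y+tN)(Y+tN),X\rangle$ has derivative $2\langle S(X,Y)N,X\rangle$ once one simplifies using the symmetries of $S$ — forces $\langle S(X,Y)N,X\rangle=0$; since $S(X,X)=0$, this extends by bilinearity to all $X,Y\in H$. Polarizing in $X$ gives $\langle S(X,Y)N,Z\rangle+\langle S(Z,Y)N,X\rangle=0$, i.e. $G$ is antisymmetric under interchanging its first and third arguments. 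Combined with the antisymmetry $G(X,Y,Z)=-G(Y,X,Z)$ already built into $S$, this makes $G$ \emph{totally} antisymmetric in $X,Y,Z$ — so in the case $\dim H=2$ (that is, $n=3$) we are already finished.

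For general $n$ I would feed $G$ into the first Bianchi identity $S(X,Y)N+S(Y,N)X+S(N,X)Y=0$: pairing with $Z\in H$ and rewriting the last two terms via the pair symmetry of $S$ turns them into $-G(X,Z,Y)$ and $G(Y,Z,X)$, so Bianchi reads $G(X,Y,Z)-G(X,Z,Y)+G(Y,Z,X)=0$; applying total antisymmetry ($G(X,Z,Y)=-G(X,Y,Z)$ and $G(Y,Z,X)=G(X,Y,Z)$) collapses this to $3\,G(X,Y,Z)=0$. Hence $G\equiv 0$, so $S(X,Y)N=0$ and therefore $R(X,Y)N=0$. I expect the only real care to be needed is in the sign bookkeeping — in the first-variation derivative and in the two rewritings that invoke the pair symmetry — rather than in any conceptual difficulty: the content is simply that a nonpositive sectional curvature which is flat along a hyperplane can have no nonzero component in the ``mixed'' direction, and the total antisymmetry of $G$ together with Bianchi forces this.
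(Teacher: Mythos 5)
Your argument is correct and essentially reproduces the paper's proof: a first-variation argument (perturbing a plane of $H$ in the $N$-direction and using that the sectional curvature is extremal there) yields the diagonal vanishing $\langle S(X,Y)N,X\rangle=0$, after which polarization and the first Bianchi identity kill the full mixed component $\langle R(X,Y)N,Z\rangle$. Subtracting $kR_0$ to form $S$ is a cosmetic repackaging of the paper's direct inequality $K(\sigma_t)\,|X_t\times Y|^2\leq k\,|X\times Y|^2$, and the preliminary observation that $S|_H=0$ is in fact never used in your argument.
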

\begin{proof}
It is enough to check that $\langle R(X,Y)N,Z\rangle=0$ for every vector $Z\in H$, since $\langle R(X,Y)N,N\rangle=0$.
Let $X_t:=X+t N$ and $\sigma_t$ be the plane spanned by $X_t$ and $Y$. Then
 $$
\langle R(X_t,Y)Y,X_t\rangle=K(\sigma_t) |X_t\times Y|^2 \leq k |X\times Y|^2 = \langle R(X,Y)Y,X\rangle.
$$ 
 So $t=0$ is a critical point of $t\mapsto \langle R(X_t,Y)Y,X_t\rangle$,
  which yields
\begin{align*}
\langle R(X,Y)Y,N\rangle&=\frac{1}{2}\frac{d}{d t}\Big|_{t=0}\langle R(X_t,Y)Y,X_t\rangle=0.
\end{align*}
It follows that
\begin{align*}\label{eq:R}
0=\langle R(X,Y+Z)(Y+Z),N\rangle
=\langle R(X,Y)Z,N\rangle+\langle R(X,Z)Y,N\rangle.
\end{align*}
So
$\langle R(X,Y)Z,N\rangle=\langle R(Z,X)Y,N\rangle$, which yields
$\langle R(Z,X)Y,N\rangle=\langle R(Y,Z)X,N\rangle$ by switching $X$ and $Y$.
Thus we have
\[\langle R(X,Y)Z,N\rangle=\langle R(Y,Z)X,N\rangle=\langle R(Z,X)Y,N\rangle.\]
By the first Bianchi identity, the sum of these quantities is zero. So they vanish.
\end{proof}

Let $X$, $Y$, $Z$ be tangent vector fields and $N$ be a normal vector field on a hypersurface $\Gamma$ immersed in $M$.
Furthermore let $\ol\nabla$ be the induced connection and $\ol R$ denote the Riemann curvature operator of $\Gamma$. The  covariant derivative of the shape operator $A$ is defined as
$
(\ol\nabla_XA)(Y):=\ol\nabla_X(A(Y))-A(\ol\nabla_XY).
$
Let $(\cdot)^\top$ denote the tangential component with respect to $\Gamma$, and set
$
(X\wedge Y)Z:=\langle Y, Z\rangle X-\langle X, Z\rangle Y.
$
The Gauss-Codazzi equations \cite{dajczer1990}*{p. 24} for $\Gamma$ are 
\begin{gather}\label{eq:gc}
\ol R(X,Y)Z=(R(X,Y)Z)^\top+(A(X)\wedge A(Y))Z,\\
R(X,Y)N=(\ol\nabla_YA)(X)-(\ol\nabla_XA)(Y).\label{eq:gc2}
\end{gather}
Now we are ready to establish the main result of this section:

\begin{proof}[Proof of Proposition \ref{prop:embedding}]
Let $X$, $Y$, $Z$ be tangent vector fields and $N$ be a normal vector field on  $\Gamma$.
By Lemma  \ref{lem:RXY}, $R(X,Y)N=0$ which yields $(R(X,Y)Z)^\top=R(X,Y)Z$. Furthermore, since $K_{M}\equiv k$ on tangents planes of $\Gamma$, we have $R(X,Y)=k\,X\wedge Y$.
Thus \eqref{eq:gc} and \eqref{eq:gc2} reduce to
\begin{gather*}
\ol R(X,Y)Z=k\, (X\wedge Y)Z+(A(X)\wedge A(Y))Z,\\
(\ol\nabla_Y A)X=(\ol\nabla_X A)Y.
\end{gather*}
These are  the Gauss-Codazzi equations if $\Gamma$ was immersed in $\M^n_k$  \cite{dajczer1990}*{p. 24}. Now the fundamental theorem for hypersurfaces \cite[Thm. 2.1(i)]{dajczer1990} completes the proof.
\end{proof}

\begin{note}\label{note:GC}
The $\C^3$ assumption  in Proposition \ref{prop:embedding} provides the minimum regularity required to express the Gauss-Codazzi equations; however, $\C^2$ or even $\C^{1,1}$ regularity might be enough, where the Gauss-Codazzi equations would hold in an integral or distributional sense. See \cites{hartman-wintner1950, mardare2003} where this approach has been worked out  in $\R^3$.
\end{note}

%%%%%%%%%%%%%%%%%%%%%%%%%%%%%%%%%%%%%%%%%%%%%%%%%%%%%%
\section{Schur's Comparison Theorem}\label{sec:schur}
%%%%%%%%%%%%%%%%%%%%%%%%%%%%%%%%%%%%%%%%%%%%%%%%%%%%%%

Here we generalize  Schur's comparison  theorem for curves in $\R^n$ \cites{chern1967,sullivan2008}, which is sometimes called the ``bow lemma'' \cite{petrunin-zamora2022}, to  $\CAT^n(k_{\leq 0})$ manifolds. A partial extension of Schur's theorem to $\textbf{H}^n$ was studied by Epstein \cite{epstein1985}, and the polygonal version, known as Cauchy's ``arm lemma'' \cite{aigner-ziegler1999}, holds in $\CAT(k_{\leq 0})$ spaces \cite{akp2019}. We begin by reviewing the basic notions of Alexandrov geometry \cites{bridson-haefliger1999,akp2019b,bbi2001} which we need.

Let $\X  $ be a metric space. The distance between a pair of points $p$, $q\in \X  $ is denoted by $|pq|$ or $|pq|_\X$.
A \emph{curve} is a continuous map  $\gamma\colon [a,b]\to \X  $. We also use $\gamma$ to refer to its image $\gamma([a,b])$.  The \emph{length} of $\gamma$, denoted by $|\gamma|$, is the supremum of $\sum|\gamma(t_i)\gamma(t_{i+1})|$ over all partitions $a=t_0\leq \dots \leq t_N=b$ of $[a,b]$. If $|\gamma|=|\gamma(a)\gamma(b)|$ then $\gamma$ is a \emph{geodesic}. We say $\X  $ is a \emph{geodesic space} if every pair of points $p$, $q\in \X  $ can be joined by a geodesic. If these geodesics are unique (up to reparametrization) they will be denoted by $pq$, and $\X  $ is called a \emph{uniquely geodesic space}. A geodesic space $\X  $ is  $\CAT(k_{\leq 0})$  if every (geodesic) triangle $\Delta$ in $\X  $ is \emph{$k$-thin}, i.e., if $\Delta'\subset\M^2_k$ is a triangle with side lengths equal to those of $\Delta$, then the distance between any pairs of points of $\Delta$ does not exceed that of the corresponding points in $\Delta'$. Every $\CAT(k_{\leq 0})$ space is uniquely geodesic. 
The local convexity assumption on the boundary of a $\CAT^n(k_{\leq 0})$ manifold $M$ ensures that small  triangles in $M$ are $k$-thin \cite{alexander-berg-bishop1993}, or $M$ is locally  $\textup{CAT}(k_{\leq 0})$. Since $M$ is simply connected, it follows from the generalized Cartan-Hadamard theorem  \cites{alexander-bishop1990,bridson-haefliger1999,bbi2001} that $M$ is a $\textup{CAT}(k_{\leq 0})$ space.  Thus  $\CAT^n(k_{\leq 0})$ manifolds are uniquely geodesic. 

A curve $\gamma\colon [a,b]\to \X $ has \emph{unit speed} if $|\gamma|_{[t,s]}|=t-s$ for all $a\leq t\leq s\leq b$. The \emph{chord} of $\gamma$ is the geodesic $\gamma(a)\gamma(b)$. We say $\gamma\colon[a,b]\to \M^2_k$ is  \emph{chord-convex} if $\gamma$  together with its chord forms a \emph{convex curve}, i.e.,  the boundary of a convex body. 

\begin{theorem}[Generalized Schur's Comparison]\label{thm:schur}
Let $\gamma_1\colon [0,\ell]\to\M^2_k$, $\gamma_2\colon [0,\ell]\to M$, where $M$ is a $\CAT^n(k_{\leq 0})$ manifold, be $\C^2$ unit speed curves, and $\kappa_1$, $\kappa_2$ denote their geodesic curvatures  respectively.  Suppose that $\gamma_1$ is chord-convex, and $\kappa_2(t)\leq\kappa_1(t)$ for all $t\in[0,\ell]$. Then $|\gamma_2(0)\gamma_2(\ell)|\geq |\gamma_1(0)\gamma_1(\ell)|$. 
\end{theorem}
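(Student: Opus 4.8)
The plan is to use Reshetnyak's majorization theorem to reduce the statement to the special case $M=\M^2_k$, where it becomes the classical Schur comparison in a model surface, obtainable from Cauchy's arm lemma (or from Chern's integral argument run intrinsically in $\M^2_k$).

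\emph{Reduction to the model plane.} Let $c$ be the chord $\gamma_2(0)\gamma_2(\ell)$, and consider the closed curve $\gamma_2\ast c$ in $M$, of length $\ell+|\gamma_2(0)\gamma_2(\ell)|$. Since $M$ is $\CAT(k_{\leq 0})$ and $k\leq 0$, Reshetnyak's theorem \cite{reshetnyak1968-english} provides a convex region $D\subset\M^2_k$ and a $1$-Lipschitz map $F\colon D\to M$ whose restriction to $\partial D$ is a unit-speed parametrization of $\gamma_2\ast c$. Let $J\subset\partial D$ be the sub-arc parametrizing $c$. For $x,y\in J$, the sub-arc of $J$ from $x$ to $y$ has length at least $|xy|_{\M^2_k}$ (it is a boundary arc of a convex region), that length equals $|F(x)F(y)|_M$ (since $F|_J$ is a unit-speed parametrization of the geodesic $c$), and $|F(x)F(y)|_M\leq|xy|_{\M^2_k}$ (since $F$ is $1$-Lipschitz); hence all three coincide. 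Thus $J$ is a straight segment of $\M^2_k$ of length $|\gamma_2(0)\gamma_2(\ell)|_M$, so its endpoints $p,q$ satisfy $|pq|_{\M^2_k}=|\gamma_2(0)\gamma_2(\ell)|_M$. The complementary arc $\tilde\gamma_2:=\cl(\partial D\setminus J)$, parametrized by arclength on $[0,\ell]$ so that $F\circ\tilde\gamma_2=\gamma_2$, is chord-convex in $\M^2_k$ (its chord is the segment $J$), has length $\ell$, and, because $F$ is $1$-Lipschitz, satisfies $|\tilde\gamma_2(t)\tilde\gamma_2(s)|_{\M^2_k}\geq|\gamma_2(t)\gamma_2(s)|_M$ for all $t,s$. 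Also $p=\tilde\gamma_2(0)$, $q=\tilde\gamma_2(\ell)$, so it suffices to prove $|\tilde\gamma_2(0)\tilde\gamma_2(\ell)|_{\M^2_k}\geq|\gamma_1(0)\gamma_1(\ell)|_{\M^2_k}$.

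\emph{Transfer of the curvature bound.} Being convex, $\tilde\gamma_2$ is twice differentiable almost everywhere, with geodesic curvature $\tilde\kappa_2\geq 0$ defined a.e. For any $\C^2$ unit-speed curve $\alpha$ in a Riemannian surface one has the chord expansion $|\alpha(t)\alpha(t+h)|=h-\tfrac1{24}\kappa_\alpha(t)^2h^3+O(h^4)$, with the leading deficit coefficient independent of the ambient metric (the ambient curvature first enters at order $h^5$). Comparing the $h^3$-deficits in $|\tilde\gamma_2(t)\tilde\gamma_2(t+h)|_{\M^2_k}\geq|\gamma_2(t)\gamma_2(t+h)|_M$ at points where $\tilde\gamma_2''$ exists yields $\tilde\kappa_2(t)^2\leq\kappa_2(t)^2$, hence $\tilde\kappa_2(t)\leq\kappa_2(t)\leq\kappa_1(t)$ a.e.

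\emph{The model-plane case and the main obstacle.} It remains to prove: if $\tilde\gamma_2,\gamma_1$ are chord-convex curves of length $\ell$ in $\M^2_k$ with $\gamma_1$ of class $\C^2$ and $\tilde\kappa_2\leq\kappa_1$ a.e., then $|\tilde\gamma_2(0)\tilde\gamma_2(\ell)|\geq|\gamma_1(0)\gamma_1(\ell)|$. I would approximate both by geodesic polygons inscribed at a common partition $0=t_0<\dots<t_N=\ell$: for fine partitions the polygon on $\gamma_1$ is convex by chord-convexity, and the curvature domination makes each interior angle of the $\tilde\gamma_2$-polygon no smaller than the corresponding one of the $\gamma_1$-polygon, i.e.\ the former is ``more open.'' Cauchy's arm lemma in $\M^2_k$ (the constant-curvature, hence classical, case of the $\CAT(k_{\leq 0})$ arm lemma \cite{akp2019}) then gives that opening the angles of a convex polygonal arc does not decrease the endpoint distance, and letting $N\to\infty$ gives the claim; alternatively one carries out Chern's integral proof of Schur's theorem directly in $\M^2_k$, using chord-convexity to control the total turning. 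The delicate point is exactly this model-plane step: the inscribed polygons of $\tilde\gamma_2$ and $\gamma_1$ do not have equal edge lengths (they differ at third order in the mesh), so Cauchy's arm lemma does not apply verbatim — one must pre-adjust the polygons (e.g.\ replace each sub-arc by a circular arc of the comparison curvature) or control the accumulated error as $N\to\infty$ — while also handling chord-convex arcs whose total turning may exceed $\pi$ and keeping the a.e.\ curvature bound on $\tilde\gamma_2$ usable through the discretization.
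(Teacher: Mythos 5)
Your reduction via Reshetnyak majorization is exactly the paper's first move, and your intended endgame (inscribed polygons plus a Cauchy-arm-lemma comparison in $\M^2_k$) is also the paper's; but the proposal stops short of a proof at precisely the step you flag as "the delicate point," and that step is the real content. Two things are left open. First, the transfer of the curvature bound to the majorizing curve $\tilde\gamma_2$ via the chord expansion $h-\tfrac{1}{24}\kappa^2h^3+o(h^3)$ is not justified at the stated regularity: $\tilde\gamma_2$ is only convex, and Alexandrov a.e.\ second differentiability gives a position expansion to order $h^2$, which yields only $|\tilde\gamma_2(t)\tilde\gamma_2(t+h)|=h+o(h^2)$ -- not the $h^3$ coefficient you need; likewise the claim that for a $\C^2$ curve in a general Riemannian manifold the ambient metric does not affect the $h^3$ term needs an argument (the paper's citation of Alexander--Bishop states the identification of chord/osculating curvature with geodesic curvature for $\C^4$ curves, which is why the paper first smooths $\gamma_2$ to $\C^\infty$ and then passes from $\textup{$\ol{chd}$-$\kappa$}$ to $\textup{$\ol{osc}$-$\kappa$}$ via Lemma \ref{lem:AB}, rather than arguing pointwise a.e.). Second, and more importantly, the model-plane comparison itself is not carried out: you correctly observe that polygons inscribed at a common parameter partition have unequal edge lengths, so Lemma \ref{prop:polyschur} does not apply, and you only list possible remedies without executing any of them, while also leaving untreated the non-strict case $\kappa_2\leq\kappa_1$ and the a.e.\ nature of $\tilde\kappa_2$.

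For comparison, the paper closes these gaps as follows. It inscribes polygons $\pi_1^N$, $\pi_2^N$ whose edges all have the \emph{same} chord length $\ell/N$ (vertices on the curves, last vertex converging to the endpoint), so the equal-edge-length hypothesis of Lemma \ref{prop:polyschur} holds verbatim; the vertex-angle comparison then comes not from an a.e.\ pointwise bound but from a \emph{strict} osculating-curvature inequality $\textup{$\ol{osc}$-$\tilde\kappa_2$}<\kappa_1=\textup{$\ul{osc}$-$\kappa_1$}$, obtained by first treating $\kappa_2<\kappa_1$, smoothing $\gamma_2$, and invoking Lemma \ref{lem:AB}; strictness is what makes the angle comparison hold uniformly for large $N$. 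The general case $\kappa_2\leq\kappa_1$ is then reduced to the strict one by replacing $\gamma_1$ with the curve of curvature $\kappa_1+\epsilon$, which requires the separate analysis (transversal versus tangential meeting of $\gamma_1$ with the line through its chord, and the possibility that $\gamma_1$ ends in a geodesic segment) to ensure chord-convexity survives the perturbation. Without these devices -- or a worked-out substitute controlling the edge-length discrepancies and total turning beyond $\pi$ in the limit $N\to\infty$ -- the argument as proposed does not yet prove the theorem.
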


 We need the following  well-known result. Let $\gamma\colon [0,\ell]\to \X  $ be a unit speed curve, which is \emph{closed}, i.e., $\gamma(0)=\gamma(\ell)$. Let $\ol\gamma \colon [0,\ell]\to \M^2_k$ be another unit speed curve which bounds a convex body $C$. A \emph{nonexpanding} (or $1$-Lipschitz) map from a subset of a metric space into another is a map which does not increase distances. We say that $\ol\gamma$ \emph{majorizes} $\gamma$ provided that there exists a nonexpanding map $f\colon C\to\X  $ with $f\circ \ol\gamma=\gamma$. The curve  $\ol\gamma$ has also been called an ``unfolding'' \cites{cks2002,ghomi-wenk2021} or ``chord-stretching'' \cites{sallee1973,brooks-strantzen1992} of $\gamma$. We call $f$ the \emph{majorization map}. A curve is \emph{rectifiable} if it has finite length. 

\begin{lemma}[Reshetnyak's Majorization Theorem \cites{akp2019,reshetnyak1968}]\label{lem:reshetnyak}
Every closed rectifiable curve in a $\CAT(k_{\leq 0})$ space is majorized by a closed convex curve in $\M^2_k$.
\end{lemma}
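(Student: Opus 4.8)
The plan is to follow Reshetnyak's classical argument, reducing first to polygons and then proving the polygonal case by induction on the number of edges. For the reduction, write the given curve as $\gamma\colon[0,\ell]\to\X$, where $\X$ is the ambient $\CAT(k_{\leq 0})$ space, and for each partition $0=t_0\leq\dots\leq t_N=\ell$ let $\gamma_N$ be the inscribed closed polygon formed by the geodesics $[\gamma(t_i)\gamma(t_{i+1})]$. As the mesh tends to $0$ we have $\gamma_N\to\gamma$ uniformly and $|\gamma_N|\to|\gamma|$. Granting the polygonal case, each $\gamma_N$ is majorized by a closed convex curve $\ol\gamma_N$ in $\M^2_k$ of perimeter $|\gamma_N|$, via a nonexpanding map $f_N\colon C_N\to\X$ on the convex body $C_N$ bounded by $\ol\gamma_N$. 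Since $k\leq0$, the perimeter bounds the diameter of a convex body in $\M^2_k$, so after an isometry of $\M^2_k$ the $C_N$ lie in a fixed compact set and, along a subsequence, converge in the Hausdorff sense to a convex body $C$ with $\partial C=\ol\gamma$. The maps $f_N$ are $1$-Lipschitz with images of bounded diameter in the proper space $\X$, so an Arzel\`a--Ascoli argument extracts a further subsequence converging to a nonexpanding $f\colon C\to\X$ with $f\circ\ol\gamma=\gamma$; matching up parametrizations along the way is routine.

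For the polygonal case, the base of the induction is a geodesic triangle $T=[x_0x_1x_2]$ in $\X$. It is a classical consequence of the $k$-thinness property that $T$ is majorized by its comparison triangle $\ol T\subset\M^2_k$: ruling $\ol T$ by the geodesics joining $\ol x_0$ to the side $[\ol x_1\ol x_2]$, and $T$ by the corresponding geodesics from $x_0$, one obtains a map $\ol T\to T$ that matches $\partial\ol T$ to $\partial T$ edgewise by arc length, and the $\CAT(k_{\leq 0})$ inequality applied to the thin subtriangles cut off by pairs of ruling geodesics shows it is nonexpanding. For the inductive step, let $P=[x_0x_1\dots x_{n-1}]$, $n\geq4$, be a closed polygon in $\X$, and cut it along the geodesic diagonal $[x_0x_2]$ into the triangle $[x_0x_1x_2]$ and the $(n-1)$-gon $P'=[x_0x_2x_3\dots x_{n-1}]$. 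By the base case and the inductive hypothesis these are majorized by a comparison triangle $\ol T$ and a convex polygon $\ol P'$ in $\M^2_k$, whose edges corresponding to $[x_0x_2]$ have the common length $|x_0x_2|$. Glue $\ol T$ and $\ol P'$ in $\M^2_k$ along that edge, on opposite sides of it, to obtain a closed polygon $\ol Q$ with exactly the side lengths of $P$, bounding the region $D=C_{\ol T}\cup C_{\ol P'}$; the two majorization maps agree on the shared edge, each being the arc-length parametrization of $[x_0x_2]\subset\X$, so they combine into a map $g\colon D\to\X$. If $D$ is convex in $\M^2_k$, then $g$ is nonexpanding: given $p,p'$ in the two pieces, the planar segment $[pp']$ crosses the shared edge at a point $r$, and $|g(p)g(p')|_\X\leq|g(p)g(r)|_\X+|g(r)g(p')|_\X\leq|pr|+|rp'|=|pp'|$. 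Thus $\ol Q$ is the required convex curve.

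The remaining case is that $D$ is not convex, i.e.\ $\ol Q$ has a reflex vertex, which can only occur at $\ol x_0$ or $\ol x_2$, since at every other vertex the angle of $\ol Q$ equals that of one of the convex pieces $\ol T$, $\ol P'$, hence is $\leq\pi$; in particular $\ol Q$ has at most two reflex vertices. Note that $D$, a union of two convex planar pieces glued along an edge, carries a natural intrinsic length metric (a $\CAT(k_{\leq 0})$ metric, in fact), that $\partial D=\ol Q$ has length $|P|$, and that the computation above still shows $g$ is nonexpanding for the intrinsic metric of $D$ (a shortest path between points of different pieces must cross the separating shared edge, and is straight on each convex side). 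It therefore suffices to majorize $\ol Q$ — viewed as a curve in the planar polygonal region $D$ — by a convex polygon in $\M^2_k$ with the same side lengths, since composing the resulting map with $g$ finishes the proof. This is the two-dimensional instance of the theorem, and I would treat it by \emph{flattening}: straighten the reflex vertices of $\ol Q$ one at a time, deforming $D$ through a family of planar polygonal regions in which the offending reflex interior angle decreases to $\pi$ while all edge lengths and all other angles are held fixed. By Cauchy's arm lemma in $\M^2_k$, $k\leq0$ — valid since $\M^2_k$ is $\CAT(k_{\leq 0})$, cf.\ \cite{akp2019} — each such unbending admits a nonexpanding map from the new region onto the old one matching boundaries by arc length; after at most two such steps $D$ is replaced by a convex body in $\M^2_k$, and composing the (at most two) unbending maps with $g$ yields the desired majorization of $P$.

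I expect the flattening step to be the main obstacle: one must verify that straightening a single reflex vertex yields a genuine nonexpanding map of the entire two-dimensional region, not merely the non-decrease of the distance between the two free ends of a polygonal arm, which is the classical form of the arm lemma; and the choice of the auxiliary diagonal along which a rigid sub-arc is rotated must be made carefully, particularly when the reflex vertex is an endpoint of the shared edge $[\ol x_0\ol x_2]$. A secondary technicality is the compactness argument in the reduction to polygons, where the domains $C_N$ themselves vary and one must pass to the limit in the bodies and the maps simultaneously and compatibly with the parametrizations.
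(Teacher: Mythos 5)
The paper does not prove this lemma at all: it is quoted as a classical result with citations to Reshetnyak and to Alexander--Kapovitch--Petrunin, so the only meaningful comparison is with the standard published proof --- and your outline (inscribed polygons, induction on the number of edges, line-of-sight majorization of a triangle by its comparison triangle, cutting along a diagonal and gluing the two majorants) is exactly that classical scheme. The problem is that, as written, your argument has a genuine gap precisely at the step that constitutes the real content of the theorem: the ``flattening'' of the glued region $D$. Cauchy's arm lemma in $\M^2_k$ only says that opening the angles of a polygonal arm (with edge lengths fixed) does not decrease the distance between its two endpoints; it is a statement about two boundary points, and it does not produce a nonexpanding map from the straightened two-dimensional region onto $D$ matching boundaries by arc length. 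Asserting that ``each such unbending admits a nonexpanding map'' by the arm lemma is therefore not a proof, and you cannot instead invoke the theorem for the $\CAT(k)$ surface $D$ itself without circularity. In Reshetnyak's paper and in the AKP treatment this is handled by a dedicated lemma (majorization is stable under concatenating two arcs whose closed curves with a common chord are each majorized), whose proof is an explicit plane-geometry construction of the convexifying nonexpanding map; some such argument must be supplied, and you correctly identify it as the main obstacle --- but identifying the obstacle does not discharge it.

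A secondary, fixable gap is in the reduction to polygons: the lemma is stated for an arbitrary $\CAT(k_{\leq 0})$ space, which need not be proper or even locally compact, so the Arzel\`a--Ascoli extraction of a limit of the maps $f_N$ is not available as stated. One standard repair is to pass to the limit only in the boundary data --- show that the arc-length correspondence $\partial C\to\gamma$ is nonexpanding for the ambient metric of $\M^2_k$, using convergence of the convex figures $C_N$ and of the inscribed polygons --- and then extend it to all of $C$ by the Lang--Schroeder generalization of Kirszbraun's theorem (the paper quotes this as Lemma~\ref{lem:lang-schroeder}), rather than trying to take a limit of the two-dimensional maps themselves; alternatively one must argue properness in the special case at hand. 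With the convexification lemma proved and the limit step rephrased along these lines, your outline becomes the standard proof; without them it remains a sketch.
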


The above result allows us to replace $\gamma_2$ in Theorem \ref{thm:schur} by a curve in $\M^2_k$. The other major component of the proof will be a polygonal approximation. For distinct points $p$, $q\in \M^2_k$, let $\overset{\rightharpoonup}{pq}$ denote the unit tangent vector to $pq$ at $p$ which points towards $q$,  and set
$
\measuredangle(p,o,q):=\cos^{-1}\big(\big\langle \overset{\rightharpoonup}{op}, \overset{\rightharpoonup}{oq}\big\rangle\big)
$
for ordered triples of points.
A curve $\gamma\colon [0,\ell]\to  \M^2_k$ is \emph{polygonal} if there are points $0:=t_0< \dots<t_{N+1}:=\ell$ such that $\gamma|_{[t_{i}, t_{i+1}]}$ is a unit speed geodesic, which is called an \emph{edge} of $\gamma$. Then $\gamma(t_i)$ form \emph{vertices} of $\gamma$ for $1\leq i\leq N$. The \emph{angle} of $\gamma$ at each vertex is defined as 
$
\theta_\gamma(t_i):=\measuredangle\big(\gamma(t_{i-1}),\gamma(t_i),\gamma(t_{i+1})\big).
$
An induction on the number of vertices, as in the proof of Cauchy's  arm lemma  \cites{aigner-ziegler1999,sabitov2004}, shows:

\begin{lemma}\label{prop:polyschur}
Let $\gamma_1$, $\gamma_2\colon [0,\ell]\to\M^2_k$ be chord-convex polygonal curves, with vertices at $t_i\in (0,\ell)$, $i=1,\dots, N$. Suppose that each edge of $\gamma_1$ is equal in length to the corresponding edge of $\gamma_2$, and $\theta_{\gamma_2}(t_i)\geq\theta_{\gamma_1}(t_i)$.  Then $|\gamma_2(0)\gamma_2(\ell)|\geq |\gamma_1(0)\gamma_1(\ell)|$.
\end{lemma}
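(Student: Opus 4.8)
The plan is to argue by induction on the number $N$ of vertices, exactly as in the classical Euclidean proof of Cauchy's arm lemma. The base case $N=1$ is the statement that, in $\M^2_k$, if two triangles (or rather two-edge linkages) have two pairs of equal sides and the included angle of the second is at least that of the first, then the third side of the second is at least as long: this is the hinge/SAS comparison, which holds in every model space $\M^2_k$ of constant curvature and is also an immediate consequence of monotonicity of side length in the included angle along a geodesic in $\M^2_k$. So the heart of the matter is the inductive step.

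For the inductive step, assume the claim for chord-convex polygonal curves with $N-1$ vertices. Given $\gamma_1,\gamma_2$ with $N$ vertices, I would pick an interior vertex, say $\gamma_2(t_j)$ with $1\le j\le N$, and ``straighten'' the linkage there by replacing the two edges $\gamma_2|_{[t_{j-1},t_j]}$ and $\gamma_2|_{[t_j,t_{j+1}]}$ with a single edge — i.e., open the angle $\theta_{\gamma_2}(t_j)$ continuously up to $\pi$ (or until the curve first fails to be chord-convex, whichever comes first), keeping all other edge lengths and angles fixed. Opening one angle only increases the chord $|\gamma_2(0)\gamma_2(\ell)|$ by the hinge comparison applied to the relevant subtriangle, and it strictly decreases the vertex count once the straightened vertex becomes a genuine straight angle. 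One then has a curve with $N-1$ vertices whose chord is $\ge$ that of the original $\gamma_2$, and whose edge lengths match a correspondingly modified comparison curve built from $\gamma_1$ by the same straightening; the angle inequalities are preserved because we only increased an angle of the $\gamma_2$-side. Applying the inductive hypothesis to this reduced pair gives $|\gamma_2(0)\gamma_2(\ell)|\ge(\text{reduced chord})\ge|\gamma_1(0)\gamma_1(\ell)|$.

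The main obstacle — and the reason one must be careful, following \cite{aigner-ziegler1999,sabitov2004} rather than the naive argument — is maintaining \emph{chord-convexity} throughout the straightening. Opening an angle can destroy convexity of the closed curve formed by $\gamma_2$ together with its chord: the chord may start to cross an edge, or a vertex angle may exceed $\pi$. The standard fix is a careful case analysis: if straightening vertex $t_j$ all the way to $\pi$ keeps the curve chord-convex, do so and recurse; otherwise, at the moment convexity is about to fail, either the chord becomes tangent to (collinear with) an edge at an endpoint — in which case the chord meets a vertex and one can split the polygon into two shorter chord-convex arcs and induct on each — or some other vertex angle opens to $\pi$, which again reduces the vertex count. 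In every branch the induction descends. I would also note that in nonpositive curvature $k\le 0$ geodesics are unique and model triangles behave monotonically, so all the planar comparisons (hinge lemma, behavior of $\measuredangle$ under side length changes) are available verbatim; the only place the sign of $k$ enters is through these standard $\M^2_k$ trigonometric monotonicity facts, which hold uniformly for $k\le 0$.
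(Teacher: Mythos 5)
The paper itself gives no proof beyond the one-line remark that the lemma follows by ``an induction on the number of vertices, as in the proof of Cauchy's arm lemma,'' citing \cite{aigner-ziegler1999} and \cite{sabitov2004}. Your sketch correctly identifies this strategy, the base case (the SAS/hinge comparison in $\M^2_k$), and the chief obstacle (preserving chord-convexity during the deformation), so the overall approach matches the paper's.

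However, the mechanism you describe for the inductive step has a direction error. You propose to open an angle of $\gamma_2$ (the curve with the \emph{larger} angles) up to $\pi$, and to perform ``the same straightening'' on $\gamma_1$. Straightening a vertex of $\gamma_1$ to $\pi$ also \emph{increases} $\gamma_1$'s chord (hinge lemma again), so after both modifications you have only $|\gamma_2(0)\gamma_2(\ell)|\leq|\gamma_2^*(0)\gamma_2^*(\ell)|$, $\,|\gamma_1(0)\gamma_1(\ell)|\leq|\gamma_1^*(0)\gamma_1^*(\ell)|$, and (by induction) $|\gamma_2^*(0)\gamma_2^*(\ell)|\geq|\gamma_1^*(0)\gamma_1^*(\ell)|$. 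These three inequalities do \emph{not} imply $|\gamma_2(0)\gamma_2(\ell)|\geq|\gamma_1(0)\gamma_1(\ell)|$: the lower bound on the modified $\gamma_2$ chord and the lower bound on the modified $\gamma_1$ chord do not transfer back to the originals. In Schoenberg's proof one instead deforms $\gamma_1$ (the curve with the \emph{smaller} angles), opening its last interior angle \emph{only up to} the corresponding angle of $\gamma_2$, not up to $\pi$. If that target angle is reached while convexity persists, the two curves share an equal angle, the vertex is cut off by the common diagonal (equal by SAS), and one recurses. If convexity obstructs first, the chord of the deformed $\gamma_1$ becomes collinear with the first edge; one then peels off that edge, applies the inductive hypothesis to the $(N-1)$-vertex subarc, and closes with the triangle inequality in $\M^2_k$. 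This is the case analysis you anticipated, but the curve being deformed, and the stopping rule, need to be as above for the inequalities to chain in the right direction. The model-space inputs you list (uniqueness of geodesics and monotonicity of $\M^2_k$ triangles for $k\leq 0$) are indeed all that is needed from the constant-curvature geometry.
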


We assume that all $\C^{m\geq 1}$ curves $\gamma$ are immersed, i.e., $|\gamma'|\neq 0$.
A majorizing curve of a $\C^2$ curve may not be $\C^2$. Thus we consider some generalized notions of geodesic curvature developed by Alexander-Bishop \cite{alexander-bishop1996}. Let $\gamma\colon[0,\ell]\to \X$ be a locally one-to-one unit speed curve. Fix $t\in (0,\ell)$. For $ r<t<s$  close to $t$, let $\Delta(r,s) \subset\M^2_k$ be the triangle with side lengths equal to  the distances between $\gamma(r)$, $\gamma(t)$, and $\gamma(s)$. There exists a unique curve of constant curvature $\alpha(r,s)$ in $\M^2_k$ which circumscribes $\Delta (r,s)$. The upper and lower \emph{osculating curvatures} of $\gamma$ at $t$ are defined  respectively as
$$
\textup{$\ol{osc}$-$\kappa$}(t):=\limsup_{r,s\to t} \alpha(r,s),\quad\quad\text{and}\quad\quad \textup{$\ul{osc}$-$\kappa$}(t):=\liminf_{r,s\to t} \alpha(r,s).
$$
There exists also a curve of constant curvature $\beta(r,s)$ in $\M^2_k$ with a pair of points $p$, $q$ such that the arc length distance $\arc{pq}=r+s$, and the chord distance $|pq|_{\M^2_k}=|\gamma(r)\gamma(s)|_M$. The  upper and lower \emph{chord curvatures} of $\gamma$ at $t$ are defined  respectively as 
$$
\textup{$\ol{chd}$-$\kappa$}(t):=\limsup_{r,s\to t} \beta(r,s),\quad\quad\text{and}\quad\quad \textup{$\ul{chd}$-$\kappa$}(t):=\liminf_{r,s\to t} \beta(r,s).
$$
If $\mathcal{X}$ is a Riemannian manifold and $\gamma$ is $\C^4$, then all these curvatures coincide with the standard geodesic curvature $\kappa(t)$ of $\gamma$ \cite{alexander-bishop1996}. This is also the case  for any $\C^2$ curve in $\M^2_k$. We need the following fact which is a quick consequence of 
\cite[Cor. 3.4]{alexander-bishop1996}:

\begin{lemma}[\cite{alexander-bishop1996}]\label{lem:AB}
Let $\gamma\colon[0,\ell]\to\X$ be a locally one-to-one rectifiable curve, where $\mathcal{X}$ is a $\CAT(k)$ space. Suppose that $\textup{$\ol{chd}$-$\kappa$}(t)\leq f(t)$ for a continuous function $f\colon(0,\ell)\to\R$. Then $\textup{$\ol{osc}$-$\kappa$}(t)\leq f(t)$ as well.
\end{lemma}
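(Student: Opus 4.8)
The plan is to invoke \cite[Cor.~3.4]{alexander-bishop1996}, which (in the formulation relevant here) asserts a comparison between the chord and osculating curvatures of a rectifiable curve in a $\CAT(k)$ space. The key point is that at each fixed $t\in(0,\ell)$, the osculating curvature is governed by the chord curvature evaluated not just at $t$ but over a shrinking neighborhood of $t$; more precisely, Corollary~3.4 gives a bound of the form $\textup{$\ol{osc}$-$\kappa$}(t)\leq\limsup_{r,s\to t}\textup{$\ol{chd}$-$\kappa$}(\cdot)$ along the relevant subinterval, so one cannot simply quote it pointwise. Thus the first step is to recall the precise statement of that corollary and identify the auxiliary parameter (the interval over which the chord curvature hypothesis must hold).

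Next I would exploit the continuity of $f$. Fix $t_0\in(0,\ell)$ and let $\epsilon>0$. By continuity choose $\delta>0$ so that $f(t)\leq f(t_0)+\epsilon$ for all $t$ with $|t-t_0|<\delta$. For $r,s$ within $\delta$ of $t_0$, every point involved in the definition of $\beta(r,s)$ lies in this neighborhood, so the hypothesis $\textup{$\ol{chd}$-$\kappa$}(t)\leq f(t)\leq f(t_0)+\epsilon$ holds uniformly on $(t_0-\delta,t_0+\delta)$. Applying \cite[Cor.~3.4]{alexander-bishop1996} on this subinterval with the constant bound $f(t_0)+\epsilon$ yields $\textup{$\ol{osc}$-$\kappa$}(t_0)\leq f(t_0)+\epsilon$. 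Since $\epsilon>0$ was arbitrary, $\textup{$\ol{osc}$-$\kappa$}(t_0)\leq f(t_0)$, and $t_0$ was arbitrary in $(0,\ell)$, which completes the proof.

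The main obstacle I anticipate is purely a matter of bookkeeping rather than mathematics: extracting from \cite[Cor.~3.4]{alexander-bishop1996} exactly the right local comparison statement, since that reference is phrased in terms of model comparison curves and one-sided curvature bounds over intervals, and one must verify that a \emph{constant} upper bound for the chord curvature on a subinterval does transfer to the osculating curvature at interior points of that subinterval. Once that local constant-bound version is isolated, the reduction via continuity of $f$ and the $\epsilon$-$\delta$ argument above is routine, so no further estimates are needed.
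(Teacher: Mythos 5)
Your argument is essentially identical to the paper's proof: both fix $t_0$ and $\epsilon>0$, use continuity of $f$ to obtain the uniform constant bound $\textup{$\ol{chd}$-$\kappa$}(t)<f(t_0)+\epsilon$ on a neighborhood of $t_0$, invoke \cite[Cor.~3.4]{alexander-bishop1996} on that neighborhood to transfer the bound to $\textup{$\ol{osc}$-$\kappa$}(t_0)$, and let $\epsilon\to 0$. Your discussion of why the corollary cannot be quoted pointwise and must be applied with a constant bound on a subinterval is a correct reading of the reference and matches the paper's (terser) reasoning.
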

\begin{proof}
Let $t_0\in (0,\ell)$. For every $\epsilon>0$, there exits an open neighborhood $U$ of $t_0$ such that $\textup{$\ol{chd}$-$\kappa$}(t)<f(t_0)+\epsilon$ for $t\in U$, since $f$ is continuous. Thus, by \cite[Cor. 3.4]{alexander-bishop1996}, $\textup{$\ol{osc}$-$\kappa$}(t)<f(t_0)+\epsilon$ as well for $t\in U$. In particular $\textup{$\ol{osc}$-$\kappa$}(t_0)<f(t_0)+\epsilon$. So $\textup{$\ol{osc}$-$\kappa$}(t_0)\leq f(t_0)$, which completes the proof.
\end{proof}

Now we establish the main result of this section:

\begin{proof}[Proof of Theorem \ref{thm:schur}]
Join  $\gamma_2$ to its chord to obtain a closed curve. By Lemma \ref{lem:reshetnyak} this curve is majorized by  a  curve in $\M^2_k$. The majorizing curve consists of a chord convex curve, say $\tilde\gamma_2$, and its own chord, which has the same length as the chord of $\gamma_2$.  Note that $\textup{$\ol{chd}$-$\tilde\kappa_2$}\leq \textup{$\ol{chd}$-$\kappa_2$}$  by the majorization property. 

First assume that $\kappa_2<\kappa_1$. Then, after a perturbation, we may assume that $\gamma_2$ is $\C^\infty$, which ensures  that $\textup{$\ol{chd}$-$\kappa_2$}=\kappa_2$.
So $\textup{$\ol{chd}$-$\tilde\kappa_2$}\leq\kappa_2$. Then $\textup{$\ol{osc}$-$\tilde\kappa_2$}\leq\kappa_2<\kappa_1$, by Lemma \ref{lem:AB}.
Replacing $\gamma_2$ by $\tilde\gamma_2$, we  write $\textup{$\ol{osc}$-$\kappa_2$}<\kappa_1$. 
 There exist  oriented polygonal curves $\pi_i^N$  with $N-1$ edges of length $\ell/N$ such that the initial point of $\pi_i^N$ coincides with $\gamma_i(0)$,  the vertices of $\pi_i^N$ lie on $\gamma_i$, and the last vertex of $\pi_i^N$ converges to $\gamma_i(\ell)$ as $N\to\infty$. Since $\textup{$\ol{osc}$-$\kappa_2$}<\kappa_1=\textup{$\ul{osc}$-$\kappa_1$}$,  angles of $\pi_2^N$  will  not be smaller than the corresponding angles of $\pi_1^N$ for large $N$. Thus, by Lemma \ref{prop:polyschur}, the chord of $\pi_2^N$ is not smaller than that of $\pi_1^N$ for large $N$. So letting $N\to\infty$ completes the proof.

Next  consider the case $\kappa_2\leq \kappa_1$. We may assume that $\gamma_1(0)\neq\gamma_1(\ell)$. Let $L\subset\M^2_k$ be the complete geodesic which contains $\gamma_1(0)\gamma_1(\ell)$. If $\gamma_1$ meets $L$ transversely at both ends, then $\gamma_1$ remains chord-convex after a small perturbation. In particular we may replace $\gamma_1$ with the curve $\gamma_1^\epsilon$ in $\M^2_k$ with prescribed curvature $\kappa_1+\epsilon$, for $\epsilon>0$. Then, as discussed above, $|\gamma_2^\epsilon(0)\gamma_2^\epsilon(\ell)|\geq |\gamma_1^\epsilon(0)\gamma_1^\epsilon(\ell)|$ and letting $\epsilon\to 0$ completes the proof. 

So we may assume that $\gamma_1$ is tangent to $L$ at one of its ends, say $\gamma_1(\ell)$, and $\gamma'_1(\ell)$ points towards $\gamma_1(0)$. If $\gamma_1([\ell-{\epsilon}, \ell])\not\subset L$ for small $\epsilon>0$, then $\gamma_1([0, \ell-\epsilon])$  is chord-convex and transversal  to the geodesic through its end points.
So by the last paragraph $|\gamma_2(0)\gamma_2(\ell-\epsilon)|\geq |\gamma_1(0)\gamma_1(\ell-\epsilon)|$ and letting $\epsilon\to 0$ completes the proof. 
Thus we may assume that a segment of $\gamma_1$ near $\ell$ lies on $L$. Let $\ell'\in[0,\ell]$ be 
 the smallest number such that $\gamma_1([\ell',\ell])\subset L$. Then
$
 |\gamma_1(0)\gamma_1(\ell')|\leq |\gamma_2(0)\gamma_2(\ell')|,
$
as we just showed, and since  $\gamma_i$ have unit speed, 
$
|\gamma_2(\ell')\gamma_2(\ell)|\leq \ell-\ell'=|\gamma_1(\ell')\gamma_1(\ell)|.
$
So, since $\gamma_1(\ell)$  lies between $\gamma_1(0)$ and $\gamma_1(\ell')$ on $L$, 
$
|\gamma_1(0)\gamma_1(\ell)| = |\gamma_1(0)\gamma_1(\ell')|-|\gamma_1(\ell')\gamma_1(\ell)|
\leq |\gamma_2(0)\gamma_2(\ell')|-|\gamma_2(\ell')\gamma_2(\ell)| \,\;\leq\;\, |\gamma_2(0)\gamma_2(\ell)|,
$
as desired.
\end{proof}

\begin{note}
The proof of Theorem \ref{thm:schur} shows that we have established something more general: the ambient space $M$ of $\gamma_2$ may be  replaced by any $\CAT(k)$ space, where we relax the condition $\kappa_2(t)\leq\kappa_1(t)$ to $\textup{$\ol{chd}$-$\kappa_2$}(t)\leq\kappa_1(t)$ for $t\in(0,\ell)$.
\end{note}

\begin{note}\label{note:schur}
As is the case in $\R^n$ \cite{sullivan2008}, Theorem \ref{thm:schur} can likely be generalized to $\C^{1,1}$ curves, where the pointwise inequality $\kappa_2\leq\kappa_1$ is replaced by $\int_a^b\kappa_2dt\le\int_a^b\kappa_1dt$ for every subinterval $[a,b]\subset[0,\ell]$.
\end{note}

%%%%%%%%%%%%%%%%%%%%%%%%%%%%%%%%%%%%%%%%%%%%%%%%
\section{Proofs of Theorems \ref{thm:main} and \ref{thm:main2}}\label{sec:proof}
%%%%%%%%%%%%%%%%%%%%%%%%%%%%%%%%%%%%%%%%%%%%%%%%

Let  $M$ be a $\CAT^n(k_{\leq 0})$ manifold. We need the following special case of a theorem of Lang-Schroeder \cite{lang-schroeder1997}  who generalized Kirszbraun's extension theorem to $\CAT(k)$ spaces; see also \cite{akp2011} \cite[Chp. 10]{akp2019}.

\begin{lemma}[\cite{lang-schroeder1997}]\label{lem:lang-schroeder}
Let  $S\subset\M^n_k$. Then any nonexpanding map $S\to M  $ extends to a nonexpanding map $\M^n_k\to M  $.
\end{lemma}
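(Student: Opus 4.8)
The statement to prove is Lemma~\ref{lem:lang-schroeder}: any nonexpanding map from a subset $S \subseteq \M^n_k$ into a $\CAT^n(k_{\leq 0})$ manifold $M$ extends to a nonexpanding map $\M^n_k \to M$.

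The plan is to reduce this to the Lang--Schroeder Kirszbraun-type extension theorem, whose hypotheses concern the curvature bounds of the source and target spaces. First I would recall the precise statement of Lang--Schroeder: if $X$ has curvature bounded below by $\kappa$ (in the sense of Alexandrov) and $Y$ is $\CAT(\kappa)$ — or more precisely has curvature bounded above by $\kappa$, is complete, and satisfies an appropriate non-branching/convexity hypothesis — then every nonexpanding map from a subset of $X$ into $Y$ extends to a nonexpanding map on all of $X$. So the two things to verify are: (1) the source $\M^n_k$ has curvature bounded below by $k$; and (2) the target $M$ is a complete $\CAT(k)$ space to which the Lang--Schroeder conclusion applies. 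Step (1) is immediate: $\M^n_k$ is the model space of constant curvature $k$, so it has curvature exactly $k$, hence in particular $\geq k$ in the comparison sense; it is also geodesically complete. Step (2) is where I would invoke the discussion already in Section~\ref{sec:schur}: a $\CAT^n(k_{\leq 0})$ manifold $M$ is metrically complete by definition, is locally $\CAT(k_{\leq 0})$ because of the locally convex boundary (via Alexander--Berg--Bishop), and since it is simply connected the generalized Cartan--Hadamard theorem upgrades this to a global $\CAT(k_{\leq 0})$ space; being $\CAT(k_{\leq 0})$ with $k \leq 0$ it is in particular $\CAT(k)$, uniquely geodesic, and complete. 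These are exactly the structural properties Lang--Schroeder require of the target.

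With both spaces identified, the lemma is essentially a verbatim specialization: apply \cite{lang-schroeder1997} with source $X = \M^n_k$ (curvature $\geq k$, complete), subset $S$, target $Y = M$ (complete $\CAT(k)$), and nonexpanding map $f \colon S \to M$; the theorem produces a nonexpanding extension $\bar f \colon \M^n_k \to M$. One small caveat worth addressing explicitly in the write-up is the sign convention on $k$: since $k \leq 0$ throughout this paper, there is no issue with the parameter being negative (the Lang--Schroeder theorem and the Kirszbraun property are stated for arbitrary real curvature bounds, with a diameter restriction only when $k > 0$, which does not arise here), so no extra hypothesis on the diameter of $S$ or of $M$ is needed.

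The main obstacle — insofar as there is one — is purely expository: making sure the hypotheses of the cited Lang--Schroeder theorem are matched correctly, in particular confirming that $M$ qualifies as an admissible target (complete and $\CAT(k)$, which was already established in Section~\ref{sec:schur} and can simply be cited), and that no auxiliary condition such as cocompactness, local compactness, or finite-dimensionality of the target is being silently used. Since $\M^n_k$ is a model space and $M$ is a genuine Riemannian manifold (hence locally compact and geodesically complete), all such side conditions are automatically satisfied, so the proof reduces to a one-paragraph citation. There is no hard analysis here; the content is entirely in the earlier structural identification of $\CAT^n(k_{\leq 0})$ manifolds as global $\CAT(k)$ spaces, which we take as given.
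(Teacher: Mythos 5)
Your proposal is correct and matches the paper's treatment: the lemma is stated in the paper as a direct specialization of the Lang--Schroeder theorem with no independent proof given, and the only content (as you note) is matching hypotheses — $\M^n_k$ has curvature $\geq k$ and is complete, while $M$ was already identified in Section~\ref{sec:schur} as a complete global $\CAT(k_{\leq 0})$, hence $\CAT(k)$, space. Your observation that $k \leq 0$ removes any diameter restriction is a correct and useful clarification, though the paper leaves it implicit.
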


 Let $\X  $, $\X  '$ be geodesic spaces,  $S\subset \X  $ and $S'\subset \X  '$ be path connected subsets, and 
 $f\colon S\to S'$ be a bijection. We say $f$ is an \emph{extrinsic isometry} provided that $|f(p)f(q)|_{\X'}=|pq|_\X$ for all $p$, $q\in S$. On the other hand,  $f$ is an \emph{(intrinsic) isometry} if it preserves the lengths of curves in $S$. When $S$ and $S'$ are convex,  the two notions coincide.

\begin{lemma}\label{lem:C}
Let  $C\subset \M^n_k$ and $C'\subset  M $  be compact convex bodies. Suppose there exists an extrinsic isometry between boundaries of $C$ and $C'$.  Then $C$ and $C'$ are isometric. 
\end{lemma}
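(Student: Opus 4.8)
\textbf{Proof proposal for Lemma \ref{lem:C}.}

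The plan is to build the isometry $C\to C'$ by extending the given boundary isometry $f\colon\partial C\to\partial C'$ using the Kirszbraun-type extension of Lemma \ref{lem:lang-schroeder}, applied in both directions, and then showing the two extensions are mutually inverse. First I would observe that $f$ is nonexpanding: $|f(p)f(q)|_M=|pq|_{\M^n_k}$ on $\partial C$ by hypothesis. Applying Lemma \ref{lem:lang-schroeder} with $S=\partial C$, extend $f$ to a nonexpanding map $F\colon\M^n_k\to M$, and restrict to $C$. Since $C'\subset M$ is convex (hence a $\CAT(k_{\leq 0})$ space in its own right, with the same Kirszbraun property by Lemma \ref{lem:lang-schroeder} applied with the roles of the model space and the ambient manifold suitably interpreted — here I would instead use the symmetric statement that $f^{-1}\colon\partial C'\to\partial C$ is nonexpanding and extends over $\M^n_k$, but we need a map \emph{into} $C$), I would similarly extend $f^{-1}$ to a nonexpanding map $G\colon M\to\M^n_k$ using that $M$ is $\CAT(k_{\leq0})$: by the Lang--Schroeder theorem a nonexpanding map from any subset of a $\CAT(k)$ space into $\M^n_k$ extends, so $G$ exists with $G|_{\partial C'}=f^{-1}$.

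Next I would show $F(C)\subset C'$ and $G(C')\subset C$, so that the compositions $G\circ F\colon C\to\M^n_k$ and $F\circ G\colon C'\to M$ make sense and have image in $C$, $C'$ respectively. For this: $F$ is nonexpanding and $F(\partial C)=\partial C'$; if some $x\in C$ had $F(x)\notin C'$, then the nearest-point projection $\pi_{C'}$ onto the convex body $C'$ in the $\CAT(k_{\leq0})$ space $M$ is nonexpanding and strictly decreases the distance from $F(x)$ to at least one boundary point it is not fixing, contradicting volume/diameter bookkeeping — more cleanly, I would argue that $G\circ F$ is a nonexpanding self-map of $C$ (composition of nonexpanding maps, using $F(C)\subset M$ and $G\colon M\to\M^n_k$) fixing $\partial C$ pointwise, and likewise $F\circ G$ fixes $\partial C'$ pointwise. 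A nonexpanding self-map of a compact convex body in $\M^n_k$ fixing the boundary pointwise must be the identity: indeed for any interior point $x$, write $x$ on a chord between two boundary points $a,b$; then $|x a|+|xb|=|ab|=|(G\!\circ\! F)(a)(G\!\circ\!F)(b)|\le |(G\!\circ\!F)(x)a|+|(G\!\circ\!F)(x)b|$ forces $(G\circ F)(x)$ to lie on the geodesic $ab$ with the same distances to the endpoints, hence equals $x$ by unique geodesics. Thus $G\circ F=\mathrm{id}_C$; symmetrically $F\circ G=\mathrm{id}_{C'}$ (here $C'$ is a convex body in a uniquely geodesic space, so the same chord argument applies). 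Consequently $F|_C$ and $G|_{C'}$ are mutually inverse nonexpanding bijections, hence both are isometries, and $C\cong C'$.

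The main obstacle I anticipate is the subtlety in getting the \emph{codomain} right when extending $f^{-1}$: Lemma \ref{lem:lang-schroeder} as stated extends maps \emph{into} $M$, whereas I want a map into $\M^n_k$ (equivalently into $C$). This is resolved by the fact that the full Lang--Schroeder theorem is symmetric in the curvature bound — a nonexpanding map from a subset of a $\CAT(k_{\leq0})$ space into $\M^n_k$ extends, because $\M^n_k$ is a complete $\CAT(k_{\leq0})$ space with the requisite convexity (it is the model space itself) — so one may invoke it with $M$ as the source and $\M^n_k$ as the target. A secondary technical point is ensuring the chord argument for "boundary-fixing nonexpanding self-map is the identity" is valid near boundary points and uses only unique geodesics and the triangle inequality, which holds since $\M^n_k$ and convex bodies in $\CAT(k_{\leq0})$ spaces are uniquely geodesic; strict convexity of $C,C'$ is not needed, only that every interior point lies on a genuine chord. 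With these two points handled, the proof is a short assembly of the extension theorem plus this rigidity observation.
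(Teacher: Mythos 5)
Your plan has a genuine gap at exactly the point you flagged as a ``secondary technical point'': the reverse extension $G\colon M\to\M^n_k$ does not follow from the Lang--Schroeder theorem. That theorem extends $1$-Lipschitz maps from a subset $S$ of a space with curvature \emph{bounded below} by $\kappa$ into a complete $\CAT(\kappa)$ space. In the forward direction this is fine: $\M^n_k$ has curvature exactly $k$, so it satisfies the lower bound $\geq k$, and $M$ is $\CAT(k)$. But in the reverse direction the source would be $M$, which only has curvature $\leq k$; a $\CAT^n(k_{\leq 0})$ manifold has no lower curvature bound in general (think of a hyperbolic manifold with $k=0$, or any Cartan--Hadamard manifold whose curvature is unbounded below). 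The theorem is \emph{not} symmetric in the way you assert, and $G$ need not exist. Since your whole mechanism is ``$G\circ F$ is a boundary-fixing nonexpanding self-map, hence the identity,'' this collapses.

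The good news is that the chord argument you isolate is exactly the right idea --- it just needs to be applied to $F$ alone rather than to $G\circ F$. Given $x_1,x_2\in C$, extend the geodesic $x_1x_2$ inside $\M^n_k$ to a chord $y_1y_2$ of $C$ with $y_1,y_2\in\partial C$. Then $|F(y_1)F(y_2)|\leq|F(y_1y_2)|\leq|y_1y_2|=|F(y_1)F(y_2)|$ (first inequality: a path is at least as long as the distance between its endpoints; second: $F$ is nonexpanding; equality: $F|_{\partial C}=f$ is an extrinsic isometry). So $F$ maps $y_1y_2$ isometrically onto the geodesic $F(y_1)F(y_2)$, which forces $|F(x_1)F(x_2)|=|x_1x_2|$. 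Thus $F|_C$ is an extrinsic isometry onto its image without ever constructing $G$. Convexity of $C'$ then gives $F(C)\subset C'$, and surjectivity follows by running chords of $C'$ through any target point and pulling back their endpoints by $f^{-1}$. This is precisely what the paper does; your proposal contains all the needed ingredients, but routes them through a nonexistent map.
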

\begin{proof}
Let $\Gamma$ and $\Gamma'$ denote the boundaries of $C$ and $C'$ respectively, and
$f\colon\Gamma\to\Gamma'$ be an extrinsic isometry. By Lemma \ref{lem:lang-schroeder}, $f$ extends to a nonexpanding map $\ol f\colon C\to  M $. We claim that $\ol f$ is an isometry between $C$ and $C'$. 
Let $x_i$, $i=1$, $2$, be distinct points of $C$. Since  $M$ is a $\CAT^n(k_{\leq 0})$ manifold and $C$ is compact, $x_1x_2$ may be extended from each of its end points until it meets $\Gamma$, say at points $y_1$, $y_2$ respectively. 
Let $x_i':=\ol f(x_i)$, $y_i'=\ol f(y_i)$ and $(y_1y_2)':=\ol f(y_1y_2)$.  Then $|y_1'y_2'|\leq |(y_1y_2)'|\leq |y_1y_2|=|y_1'y_2'|$. Thus $|(y_1y_2)'|=|y_1'y_2'|$ which yields that $(y_1y_2)'=y_1'y_2'$. In particular $x_i'$ lies on $y_1'y_2'$. Consequently $|y_1x_i|+|x_iy_2|=|y_1y_2|=|y_1'y_2'|=|y_1' x_i'|+|x_i'y_2'|$. It follows that
$|y_1 x_i|=|y_1' x_i'|$ and $|x_iy_2|=|x_i'y_2'|$. So $|x_1x_2|=|y_1y_2|-|y_1x_1|-|y_2x_2|=|y_1'y_2'|-|y_1'x_1'|-|y_2'x_2'|=|x_1'x_2'|$. Thus $\ol f\colon C\to \ol f(C)$ is an extrinsic isometry. Also since $(y_1y_2)'=y_1'y_2'$ and $C'$ is convex, $\ol f(C)\subset C'$. It remains only to check that $\ol f$ is onto. Given $x'\in C'$, let $y_1'y_2'$ be a geodesic passing through $x'$, with $y_1'$, $y_2'\in\Gamma'$. Let $y_i:=f^{-1}(y_i')$. Then $(y_1y_2)'=y_1'y_2'$ as shown earlier. So $x'\in\ol f(C)$, which completes the proof.
\end{proof}

See \cite[Sec. 2]{croke2004} for results similar to the last lemma. Next we establish a rigidity property of majorizing curves, which is  known in $\R^2$ \cite{brooks-strantzen1992}. A pair of subsets $A$, $B$ of a metric space $\X$ are \emph{congruent} provided that there is an isometry, or \emph{rigid motion}, $f\colon \X\to \X$ with $f(A)=B$.

\begin{lemma}\label{lem:new}
Let $\gamma_1$, $\gamma_2$ be $\C^2$ closed convex curves in $\M^2_k$. Suppose that  $\gamma_1$ majorizes $\gamma_2$. Then $\gamma_1$ and $\gamma_2$ are congruent.
\end{lemma}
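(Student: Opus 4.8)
The plan is to show that $\gamma_1$ and $\gamma_2$, taken with arc length parametrizations on a common interval $[0,\ell]$, have the same geodesic curvature function, and then to invoke the fundamental theorem for curves in the space form $\M^2_k$. By the definition of majorization there is a nonexpanding map $f\colon C_1\to\M^2_k$ with $f\circ\gamma_1=\gamma_2$, where $C_i$ is the convex body bounded by $\gamma_i$, of area $|C_i|$. Since $f$ does not increase distances, $|\gamma_2(r)\gamma_2(s)|\le|\gamma_1(r)\gamma_1(s)|$ for all $r,s\in[0,\ell]$, while the arc length between $\gamma_i(r)$ and $\gamma_i(s)$ along $\gamma_i$ is the same for $i=1,2$. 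So $\gamma_1$ dominates each chord of $\gamma_2$ over the matching arc.

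The first and main step is to promote this chord domination to the \emph{pointwise} bound $\kappa_1\le\kappa_2$. Fix $t\in(0,\ell)$, and for $r<t<s$ near $t$ let $\beta_{\gamma_i}(r,s)$ be the curvature of the model curve in $\M^2_k$ on which two points at the relevant common arc length distance have chord distance $|\gamma_i(r)\gamma_i(s)|$, as in the definition of the chord curvature in Section~\ref{sec:schur}. On a curve of constant curvature $\alpha$ in $\M^2_k$ the chord between two points at a fixed arc length distance is a strictly decreasing function of $\alpha$; hence $|\gamma_1(r)\gamma_1(s)|\ge|\gamma_2(r)\gamma_2(s)|$ forces $\beta_{\gamma_1}(r,s)\le\beta_{\gamma_2}(r,s)$, and passing to $\limsup$ as $r,s\to t$ gives $\textup{$\ol{chd}$-$\kappa_1$}(t)\le\textup{$\ol{chd}$-$\kappa_2$}(t)$. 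Because $\gamma_1$ and $\gamma_2$ are $\C^2$ curves in $\M^2_k$, their chord curvatures coincide with their geodesic curvatures, so $\kappa_1(t)\le\kappa_2(t)$ for every $t$.

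The second step compares areas. The restriction $f|_{\gamma_1}\colon\gamma_1\to\gamma_2=\partial C_2$ is a homeomorphism, and $C_2\subseteq f(C_1)$: if some interior point $x$ of $C_2$ were omitted, then $f\circ\gamma_1=\gamma_2$, which winds nontrivially about $x$, would be null-homotopic in $\M^2_k\setminus\{x\}$ since $f$ extends over the disk $C_1$ — a contradiction. As $f$ is $1$-Lipschitz, $|C_2|\le|f(C_1)|\le|C_1|$. Applying Gauss--Bonnet to $C_i\subset\M^2_k$ gives $\int_0^\ell\kappa_i\,ds=2\pi-k|C_i|$, so $k\le 0$ and $|C_2|\le|C_1|$ yield $\int_0^\ell\kappa_2\le\int_0^\ell\kappa_1$; with the pointwise inequality $\kappa_1\le\kappa_2$ this gives $\int_0^\ell\kappa_1=\int_0^\ell\kappa_2$, hence $\kappa_1\equiv\kappa_2$ on $[0,\ell]$. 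Two unit speed curves in $\M^2_k$ with equal geodesic curvature functions differ by an isometry of $\M^2_k$, so $\gamma_1$ and $\gamma_2$ are congruent, and a posteriori all the chord inequalities above are equalities.

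The step I expect to need the most care is the first one: making precise the monotonicity of chord length in the curvature of model curves, and — more essentially — justifying the passage from chord curvature to ordinary geodesic curvature for merely $\C^2$ curves, which is exactly the identification recorded in Section~\ref{sec:schur} after the definitions of the osculating and chord curvatures. The area comparison is routine and is the only place the topology of $C_i$ enters; note that for $k=0$ it is dispensable, since then $\int_0^\ell\kappa_i=2\pi$ automatically.
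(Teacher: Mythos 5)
Your proof is correct and follows the same overall skeleton as the paper's: establish the pointwise inequality $\kappa_1\leq\kappa_2$, then combine a comparison of areas with Gauss--Bonnet to force $\int\kappa_1=\int\kappa_2$, conclude $\kappa_1\equiv\kappa_2$, and invoke uniqueness of solutions to the curvature equation. The genuine difference is in how you obtain $\kappa_1\leq\kappa_2$. The paper argues by contradiction with a second-order contact picture: if $\kappa_1(t_0)>\kappa_2(t_0)$, a rigid motion puts the two curves tangent at a common point $o$ with $C_1,C_2$ on the same side, so that a punctured neighborhood of $o$ on $\gamma_1$ lies in the interior of $C_2$, which makes the chord $|\gamma_1(t_0-\epsilon)\gamma_1(t_0+\epsilon)|$ strictly smaller than $|\gamma_2(t_0-\epsilon)\gamma_2(t_0+\epsilon)|$ and contradicts the nonexpansion of $f$. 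You instead route the argument through the chord-curvature machinery of Section~\ref{sec:schur}: the chord inequality $|\gamma_1(r)\gamma_1(s)|\geq|\gamma_2(r)\gamma_2(s)|$ gives $\beta_{\gamma_1}(r,s)\leq\beta_{\gamma_2}(r,s)$ by monotonicity of chord length in the model curvature, hence $\textup{$\ol{chd}$-$\kappa_1$}\leq\textup{$\ol{chd}$-$\kappa_2$}$, and for $\C^2$ curves in $\M^2_k$ these coincide with the geodesic curvatures. Both routes are valid; yours is arguably the more systematic one given the tools already set up in the paper, the paper's is the more elementary and self-contained. One further small point: you supply an explicit winding-number argument for $C_2\subseteq f(C_1)$ before concluding $|C_2|\leq |f(C_1)|\leq|C_1|$, a step the paper asserts but does not spell out; this is a useful clarification.
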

\begin{proof}
Let $C_i$ be the convex bodies bounded by $\gamma_i$,  $\gamma_i(t)$ be unit speed parametrizations where $t\in\R/\ell$,  and $f\colon\gamma_1\to\gamma_2$ be the majorization map with $f(\gamma_1(t))=\gamma_2(t)$. By assumption, $|\gamma_1(t)\gamma_1(s)|\geq |\gamma_2(t)\gamma_2(s)|$ for all $t$, $s\in \R/\ell$. Let $\kappa_i(t)$ denote the curvature of $\gamma_i$. Suppose that $\kappa_1(t_0)>\kappa_2(t_0)$, for some $t_0\in\R/\ell$. After a rigid motion  we may assume that $\gamma_1(t_0)=\gamma_2(t_0)=o$, and $\gamma_1$, $\gamma_2$ are tangent to each other at $o$, and lie on the same side a geodesic which passes through $o$. Then there exists a neighborhood $U$ of $o$ in $\gamma_1$ such that $U\setminus\{o\}$ lies in the interior of $C_2$. It follows that $|\gamma_1(t_0-\epsilon)\gamma_1(t_0+\epsilon)|< |\gamma_2(t_0-\epsilon)\gamma_2(t_0+\epsilon)|$, for some $\epsilon>0$, which is a contradiction. Thus $\kappa_1(t)\leq\kappa_2(t)$ for all $t\in\R/\ell$. Furthermore, if $|C_i|$ denote the area of $C_i$, then $|C_2|\leq |C_1|$, since by definition $f$ extends to a nonexpansive map $C_1\to C_2$.
Thus, by Gauss-Bonnet theorem,
$$
2\pi-k|C_1|=\int_{0}^\ell\kappa_1(t)\,dt\leq \int_0^\ell\kappa_2(t)\,dt=2\pi-k|C_2|\leq 2\pi -k |C_1|.
$$
So $\int_{0}^\ell\kappa_1(t)\,dt= \int_0^\ell\kappa_2(t)\,dt$, which yields $\kappa_1\equiv\kappa_2$. Hence $\gamma_1$ and $\gamma_2$ are congruent by the uniqueness of solutions to the geodesic curvature equation.
\end{proof}

Combining the last two observations  with the generalized Schur's comparison theorem and Reshetnyak's majorization theorem, we obtain the following key result. 

\begin{proposition}\label{prop:main}
Let  $C\subset  M $ and $C'\subset \M^n_k$  be compact convex bodies with $\C^2$ boundaries $\Gamma$ and $\Gamma'$ respectively. Suppose that there exists an isometry $\Gamma\to\Gamma'$ which preserves the second fundamental form. Then $C$ and $C'$ are isometric. 
\end{proposition}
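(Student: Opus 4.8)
The plan is to reduce Proposition~\ref{prop:main} to Lemma~\ref{lem:C} by showing that the given intrinsic isometry $f\colon\Gamma\to\Gamma'$ is in fact an \emph{extrinsic} isometry, i.e.\ that it preserves the distances measured in the ambient spaces $M$ and $\M^n_k$. Once this is done, Lemma~\ref{lem:C} (applied with the roles of $\M^n_k$ and $M$ as stated there) immediately yields that $C$ and $C'$ are isometric, and we are finished. So the whole content is the passage from ``preserves arclength of curves in $\Gamma$ and preserves $\ff_\Gamma$'' to ``preserves chord lengths.''

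First I would fix two points $p,q\in\Gamma$ with images $p'=f(p)$, $q'=f(q)$, and compare $|pq|_M$ with $|p'q'|_{\M^n_k}$. The natural object to look at is a shortest path $\sigma$ in $\Gamma'$ from $p'$ to $q'$ (a geodesic of the intrinsic metric of $\Gamma'$), and its preimage $f^{-1}(\sigma)$ in $\Gamma$, which has the same intrinsic length since $f$ is an isometry. Because $f$ preserves $\ff_\Gamma$ and preserves the Levi-Civita connection of the induced metric (which is determined by the induced metric alone), it preserves the full second fundamental form data, hence the geodesic curvature vectors of corresponding curves; in particular the geodesic curvature $\kappa$ of $f^{-1}(\sigma)$ as a curve in $M$ equals, at corresponding parameters, the geodesic curvature of $\sigma$ as a curve in $\M^n_k$, because in both cases $\kappa^2 = |\ff(T,T)|^2 + |\kappa^{\mathrm{intr}}|^2$ where $\kappa^{\mathrm{intr}}$ is the intrinsic geodesic curvature inside the hypersurface, and all three ingredients are preserved. (Here I use that $\Gamma'$, being a convex $\C^2$ hypersurface in a constant-curvature space, and $\Gamma$ being $\C^2$, are enough to run the curvature comparison; the $\C^2$ hypothesis on the boundaries is exactly what is needed to talk about $\ff_\Gamma$ and $\kappa$.) Since $\Gamma'$ bounds a convex body $C'$ in $\M^n_k$, the curve $\sigma$ lies on $\partial C'$ and, together with its chord $p'q'$, bounds a convex planar-section region, so $\sigma$ is chord-convex in the sense of Section~\ref{sec:schur} — or at least each sub-geodesic-arc of it is, which is what the argument needs. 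Now Theorem~\ref{thm:schur} (Generalized Schur) applies to the pair $(\sigma, f^{-1}(\sigma))$ with $\gamma_1=\sigma\subset\M^2_k$ — after isometrically placing the plane containing the convex section of $\partial C'$, or more robustly after applying Reshetnyak majorization to flatten $\sigma$ into $\M^2_k$ with the same curvature bound — and $\gamma_2=f^{-1}(\sigma)\subset M$, giving $|pq|_M=|\gamma_2(0)\gamma_2(\ell)|\ge|\gamma_1(0)\gamma_1(\ell)|=|p'q'|_{\M^n_k}$.

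For the reverse inequality $|pq|_M\le|p'q'|_{\M^n_k}$ I would run the symmetric argument with the roles of $M$ and $\M^n_k$ interchanged: take a shortest path $\tau$ in $\Gamma$ from $p$ to $q$, map it by $f$ to a curve $f(\tau)$ in $\Gamma'$ of equal intrinsic length and equal pointwise geodesic curvature, note that $f(\tau)$ lies on the boundary of the convex body $C'$, and apply Schur's comparison \emph{in the model space} — which here is the classical Schur theorem in $\M^n_k$, or again Theorem~\ref{thm:schur} with the model space itself serving as the ``$\CAT(k)$ ambient'' — to get $|p'q'|_{\M^n_k}\ge|pq|_M$. The only subtlety is chord-convexity of $\tau$ inside $\Gamma=\partial C$: since $C$ is convex in $M$, the surface $\Gamma$ has nonnegative second fundamental form, and a shortest path on a convex hypersurface together with its ambient chord bounds a convex region, so after Reshetnyak-flattening into $\M^2_k$ the resulting curve is chord-convex and has curvature equal to that of $f(\tau)$. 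Combining the two inequalities gives $|pq|_M=|p'q'|_{\M^n_k}$ for all $p,q\in\Gamma$, so $f$ is an extrinsic isometry, and Lemma~\ref{lem:C} concludes.

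The main obstacle I anticipate is the chord-convexity bookkeeping together with the curvature-comparison setup: Theorem~\ref{thm:schur} as stated requires one of the two curves to live literally in $\M^2_k$ and to be chord-convex there, whereas our natural competitor curves live on curved hypersurfaces in $M$ or in $\M^n_k$. The clean way around this is to invoke Reshetnyak majorization (Lemma~\ref{lem:reshetnyak}) to replace a boundary geodesic of a convex body by a planar convex curve in $\M^2_k$ whose chord curvature is bounded by the original geodesic curvature, and then feed that into Theorem~\ref{thm:schur} via the chord/osculating curvature machinery of Lemma~\ref{lem:AB}; one must check that closing up $\tau$ (resp.\ $\sigma$) with its ambient chord produces a \emph{rectifiable closed} curve and that the majorizing curve's chord has the right length, which is routine given that chords are preserved under the majorization map. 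A secondary point to be careful about is that ``preserves $\ff_\Gamma$'' must be shown to imply ``preserves the ambient geodesic curvature of every $\C^2$ curve in $\Gamma$'': this follows from the decomposition $\kappa_{\text{ambient}}^2=|\kappa_{\text{intr}}|^2+\ff_\Gamma(T,T)^2$ for unit-speed curves, where $\kappa_{\text{intr}}$ depends only on the induced metric (preserved since $f$ is an intrinsic isometry) and $\ff_\Gamma(T,T)$ is preserved by hypothesis — so no genuine difficulty, just a lemma to state explicitly before the main argument.
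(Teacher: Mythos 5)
Your overall strategy---reduce to Lemma~\ref{lem:C} by upgrading the intrinsic isometry $f$ to an extrinsic one, and get the crucial chord-length comparison from Schur's theorem---is the same as the paper's, and your observation that $f$ preserves the ambient geodesic curvature of every curve on $\Gamma$ (via $\kappa_{\mathrm{amb}}^2 = \kappa_{\mathrm{intr}}^2 + \ff(T,T)^2$) is exactly what the paper uses. But the execution has a genuine gap that I don't think can be patched along the lines you sketch, and the paper avoids it by a different choice of curve and a different argument for the reverse inequality.

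The gap is in chord-convexity and the direction of the Reshetnyak curvature bound. Theorem~\ref{thm:schur} requires the comparison curve $\gamma_1$ to live \emph{literally} in $\M^2_k$, to be chord-convex, and to satisfy $\kappa_2 \le \kappa_1$. You take for $\gamma_1$ a shortest path $\sigma$ in the intrinsic metric of $\Gamma'$; such a curve is generically not planar, so it does not sit in any copy of $\M^2_k$, and the claim that ``$\sigma$, together with its chord $p'q'$, bounds a convex planar-section region'' is false. To repair this you propose to Reshetnyak-flatten $\sigma$ into $\M^2_k$. But Reshetnyak majorization produces a planar curve whose chord (and hence osculating) curvature is \emph{at most} that of the original; it gives $\kappa_{\tilde\sigma}\le\kappa_\sigma$, not equality. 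Since $\kappa_{f^{-1}(\sigma)}=\kappa_\sigma\ge\kappa_{\tilde\sigma}$, you would need to apply Schur with $\gamma_1=\tilde\sigma$, $\gamma_2=f^{-1}(\sigma)$ under the hypothesis $\kappa_2\le\kappa_1$, which is exactly the inequality you do not have. The same problem kills the ``symmetric'' argument for the reverse inequality: the statement that the flattened $\tau$ ``has curvature equal to that of $f(\tau)$'' is not what Reshetnyak gives you, and Schur's theorem is structurally asymmetric (one curve in $\M^2_k$, the other in a $\CAT(k)$ space), so you cannot simply interchange roles.

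The paper sidesteps both issues. For the forward inequality it takes $\gamma'$ to be a \emph{planar section} $\Pi\cap\Gamma'$ with $\Pi\subset\M^n_k$ a totally geodesic surface through $x'y'$; this curve already lives in $\Pi\cong\M^2_k$, is automatically chord-convex by convexity of $C'$, has the same ambient geodesic curvature in $\Pi$ as in $\M^n_k$ (totally geodesic), and corresponds under $f^{-1}$ to a curve in $M$ with equal curvature, so Schur applies with no flattening and no slack. For the reverse inequality the paper does \emph{not} run Schur in the other direction (which, as above, cannot work); instead it uses the already-established nonexpansion of $f$ together with Lemma~\ref{lem:new}, the Gauss--Bonnet rigidity of majorization between $\C^2$ convex curves in $\M^2_k$: the Reshetnyak majorant $\gamma''$ of $\gamma=f^{-1}(\gamma')$ also majorizes $\gamma'$ (since $f$ is nonexpanding), hence is congruent to $\gamma'$ by that rigidity, which forces $f|_\gamma$ to be noncontracting. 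You should replace your intrinsic-geodesic curves by planar sections, and replace the symmetric Schur step by this majorization-rigidity argument; as written, neither direction of your chord-length inequality goes through.
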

\begin{proof}
Let $f$ be the isometry between $\Gamma$, $\Gamma'$, and
for any $x\in\Gamma$ set $x':=f(x)$.
By Lemma  \ref{lem:C} it suffices to show that  for every pair of points $x$, $y\in\Gamma$, 
$
|xy|_M= |x'y'|_{\M^n_k}.
$ 
Let $\Pi$ be a totally geodesic complete surface in $\M^n_k$ containing $x'y'$.  We may assume that $\Pi$ is transversal to $\Gamma'$. So $\gamma':=\Pi\cap\Gamma'$ is a convex curve in $\Pi$. Let $\arc{x'y'}$ be one of the arcs connecting $x'$, $y'$ in $\gamma'$, and $\arc{xy}:=f^{-1}(\arc{x'y'})$ be the corresponding arc in $\gamma:=f^{-1}(\gamma')$. Since $f\colon\Gamma\to\Gamma'$ is an isometry which preserves the second fundamental form, $f\colon \arc{xy}\to\arc{x'y'}$ preserves both the arc length and geodesic curvature of $\arc{xy}$.  Since $\Pi$ is totally geodesic, the geodesic curvature of $\arc{x'y'}$ in $\M^n_k$ is the same as its geodesic curvature in $\Pi$, which is isometric to $\M^2_k$. Thus, by Theorem \ref{thm:schur}, $|xy|_M\geq|x'y'|_\Pi$. Since $\Pi$ is totally geodesic, $|x'y'|_\Pi=|x'y'|_{\M^n_k}$.
So  $|xy|_M\geq |x'y'|_{\M^n_k}$, or $f$ is nonexpanding. To establish the reverse inequality note that
by Reshetnyak's theorem (Lemma \ref{lem:reshetnyak}), there exists a convex curve $\gamma''\subset \Pi$ and a majorization map  $g\colon \gamma''\to \gamma$. Then $f\circ g\colon \gamma''\to\gamma'$ is a majorization map.  So  $\gamma'$ and $\gamma''$ are congruent by Lemma \ref{lem:new}, which yields  $f\circ g$ is an extrinsic isometry. Thus $f|_\gamma$ is noncontracting, i.e., $|xy|_M\leq |x'y'|_{\M^n_k}$. 
\end{proof}

The next observation is implicit in the work of Sacksteder \cite[Sec. 4]{sacksteder1960} for $k=0$, and for $k<0$ is proved similarly via the projective model of the hyperbolic space  \cite{docarmo-warner1970}. A hypersurface $\Gamma$ is \emph{locally convex} if $\ff_{\Gamma}$  is positive semidefinite with respect to a choice of normal vector field. A \emph{line} is a complete geodesic. 

\begin{lemma}[\cite{sacksteder1960}]\label{lem:sacksteder}
Let $\Gamma$ be a complete infinitesimally convex $\C^n$  hypersurface immersed in $\M^n_k$. Then either $\Gamma$ contains a line of $\M^n_k$, or it is locally convex. 
\end{lemma}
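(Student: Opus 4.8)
The plan is to follow Sacksteder's analysis of nonnegatively curved hypersurfaces \cite{sacksteder1960}*{Sec. 4} for $k=0$, and to reduce the case $k<0$ to it through the Beltrami--Klein projective model of $\mathbf{H}^n$, as in do Carmo--Warner \cite{docarmo-warner1970}. First I would dispose of two degenerate cases. If $\ff_\Gamma\equiv 0$, then $\Gamma$ is totally geodesic, hence (being complete) a totally geodesic hyperplane $\M^{n-1}_k\subset\M^n_k$, which contains lines. And one may assume that $\Gamma$ carries a global unit normal $N$: otherwise pass to the orientable double cover $\hat\Gamma$, which is complete, infinitesimally convex, and immersed in $\M^n_k$ with the same image; a line in $\hat\Gamma$ projects to a line in $\Gamma$, and if $\hat\Gamma$ were locally convex then, because the deck transformation reverses its unit normal, $\ff_{\hat\Gamma}$ would vanish identically, returning us to the previous case. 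From now on $A$ denotes the shape operator of $\Gamma$ with respect to $N$ and $\lambda_1\leq\dots\leq\lambda_{n-1}$ its eigenvalues; infinitesimal convexity means that at each point $\lambda_1\geq 0$ or $\lambda_{n-1}\leq 0$.

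Now suppose $\Gamma$ is \emph{not} locally convex. Then $A\not\succeq 0$ somewhere, so $\Gamma$ has a point of negative type ($A\preceq 0$, $A\neq 0$); and if it had no point of positive type ($A\succeq 0$, $A\neq 0$) then $A\preceq 0$ on all of $\Gamma$, so $\Gamma$ would be locally convex for $-N$ --- hence both types occur. Joining two such points by a path and using that the signature of an everywhere-definite symmetric operator is locally constant, I get a point where $A$ is degenerate, i.e.\ the index of relative nullity $\nu:=\dim\ker A$ is positive somewhere. I would then invoke the structure theory of the relative nullity foliation of hypersurfaces in space forms --- the framework underlying \cite{sacksteder1960} --- by which, on the dense open set where $\nu$ equals its local minimum, the nullity distribution is smooth and integrable, its leaves lie in totally geodesic submanifolds of $\M^n_k$, and, since $\Gamma$ is complete, the leaves through the \emph{global} minimum stratum are complete. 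If that global minimum $\nu_{\min}$ is $\geq 1$, such a complete leaf contains a complete geodesic of $\M^n_k$, i.e.\ a line, and we are done.

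The remaining possibility is $\nu_{\min}=0$: the shape operator is definite at some point, yet the positive- and negative-definite open regions coexist. This is the technical core of \cite{sacksteder1960}*{Sec. 4}: at a point $z$ separating these regions one has $A(z)=0$, infinitesimal convexity forces the degenerate locus near $z$ to be ruled by nullity directions, and the corresponding geodesics of $\Gamma$ are geodesics of $\M^n_k$ which, by completeness, extend indefinitely, yielding the required line. For $k<0$ this metric step is run in the Beltrami--Klein model $\Phi\colon\mathbf{H}^n\to B^n\subset\R^n$, under which geodesics of $\mathbf{H}^n$ become straight segments and both semidefiniteness of the second fundamental form and the property of lying locally on one side of a tangent hyperplane --- hence infinitesimal and local convexity --- are preserved; since a complete hyperbolic geodesic corresponds to a chord of $B^n$, Sacksteder's Euclidean argument applies to $\Phi(\Gamma)$ and pulls back. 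I expect this last point --- controlling completeness of the nullity leaves when the flat directions appear only along the lower-dimensional degenerate locus --- to be the main obstacle, and there I would ultimately defer to \cite{sacksteder1960} and \cite{docarmo-warner1970} rather than reprove it.
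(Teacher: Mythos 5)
Your proposal takes a genuinely different route from the paper's. The paper's proof first passes to the \emph{universal} cover (to get both orientability and simple connectedness), then focuses on the flat set $X=\{A=0\}$: by Sacksteder's Theorem~1 (via his Lemma~6, where the $\C^n$ hypothesis enters) each component $X_0$ of $X$ embeds as a convex subset of a hyperplane, and the dichotomy is topological --- either some $X_0$ separates $\Gamma$, forcing $X_0$ to be unbounded and hence to contain a line, or $\Gamma\setminus X$ is connected, in which case the nonvanishing mean curvature on $\Gamma\setminus X$ singles out a global normal making $\ff_\Gamma\succeq 0$. Your proposal instead organizes the argument around the relative nullity foliation and the value of $\nu_{\min}$. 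Both proofs ultimately defer to Sacksteder for the hard input, but they defer to different pieces of his paper and combine them differently; the paper's route handles the cases $\nu_{\min}\geq 1$ and $\nu_{\min}=0$ uniformly through the structure of $X$, which is cleaner than your case split.

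One place your sketch is thinner than it may look is the $\nu_{\min}=0$ branch. You say that near a separating point $z$ with $A(z)=0$ the degenerate locus is ``ruled by nullity directions'' whose geodesics are complete lines in $\M^n_k$. But the standard completeness theorem for nullity leaves applies only along the stratum where $\nu$ attains its \emph{minimum}; at $z$ the nullity jumps up, so that theorem gives nothing there, and there is no automatic reason the relevant geodesics extend indefinitely inside $\Gamma$. You flag this yourself as the main obstacle and defer to Sacksteder and do Carmo--Warner, which is fair --- but note that what Sacksteder actually supplies at this point (and what the paper cites) is the global convexity of each component of the flat set, not a statement about completeness of nullity leaves through high-nullity points; so as a roadmap to the literature your sketch points somewhat off to the side. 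A second, smaller point: you pass only to the orientable double cover, whereas the paper passes to the universal cover; the paper's connectedness argument for $\Gamma\setminus X$ genuinely uses simple connectedness, so if you tried to splice the two approaches you would need the stronger reduction. Your preliminary observations (the $\ff_\Gamma\equiv 0$ case, the deck-transformation argument forcing $\ff\equiv 0$ when $\hat\Gamma$ is locally convex with reversed normal, and the Beltrami--Klein reduction for $k<0$) are all correct and match the paper's treatment.
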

\begin{proof}
Replacing $\Gamma$ with its universal cover, we may assume that it is simply connected.
First suppose that $k=0$, or $\M^n_k=\R^n$. Let $X$ be the set of flat points of $\Gamma$, i.e., where $\ff_{\Gamma}$ vanishes, and $X_0$ be a component of $X$. By \cite[Thm. 1]{sacksteder1960} the inclusion map $\Gamma\to\R^{n}$ embeds $X_0$ in a convex subset of a hyperplane $H_0$ (here is where the $\C^n$ regularity assumption is used; see \cite[Lem. 6]{sacksteder1960} and the subsequent remark). Thus $\Gamma\setminus X_0$ is connected unless $X_0$ contains a line, in which case we are done. So we may assume that $\Gamma\setminus X$ is connected. Consequently, we may choose a unit normal vector field $N$ on $\Gamma$, which is the opposite of the mean curvature normal on $\Gamma\setminus X$. Then  $\ff_{\Gamma}$ is positive semidefinite with respect to $N$ as desired.

Next suppose that $k<0$. We may assume that $k=-1$ and identify $\M^n_k$, with the unit ball  $B^n\subset\R^n$ by the Beltrami-Klein projective model of $\textbf{H}^n$. 
Then $\Gamma$ forms an infinitesimally convex hypersurface of $\R^n$ \cite[Sec. 5]{docarmo-warner1970} with Cauchy boundary on $\S^{n-1}$. Note that in the projective model, geodesics are line segments in $\R^n$. Thus, again by the proof of \cite[Thm. 1]{sacksteder1960}, $X_0$ forms a convex subset of $H_0\cap B^n$; see the proof of \cite[Lem. 3]{alexander-ghomi2003} for a concise argument.  So if the closure of $X_0$ intersects $\S^{n-1}$ in more than one point, then $\Gamma$ contains a line and we are done. Otherwise $\Gamma\setminus X$ does not separate $\Gamma$, and the rest of the argument proceeds as in the previous case.
\end{proof}

Now we are ready to prove our main results:
\begin{proof}[Proof of Theorem \ref{thm:main}]
Let $\ol\Gamma$ be the universal Riemannian cover of $\Gamma$. By Proposition \ref{prop:embedding}, $\ol\Gamma$ is isometric to a complete immersed hypersurface $\ol\Gamma'$ in $\M^n_k$ with the same second fundamental form. So $\ol\Gamma'$ is infinitesimally convex.   
Then, by Lemma \ref{lem:sacksteder}, either $\ol\Gamma'$ contains a line in $\M^n_k$ or $\ol\Gamma'$ is locally convex. In the former case, $\ol\Gamma$ must contain a line in $M$, because a line of the ambient space lies on a hypersurface if and only if it is a line of the hypersurface, and the second fundamental form vanishes on tangent vectors of the line. So $\Gamma$ contains a line in $M$. This is a contradiction since $\Gamma$ is compact and lines in $M$ are unbounded. Hence $\ol\Gamma'$ is locally convex, which yields that so is $\ol\Gamma$. Consequently, by Alexander's theorem \cite{alexander1977}, $\ol\Gamma$ is convex. In particular $\ol\Gamma$ is embedded, which yields that  $\ol\Gamma=\Gamma$. So $\Gamma$ is convex. By Proposition \ref{prop:main}, the convex bodies bounded by $\Gamma$ and $\Gamma'$ are isometric, which completes the proof.
\end{proof}

Next, to prove Theorem \ref{thm:main2}, we first record the following basic fact:

\begin{lemma}\label{lem:ball}
If $ M $ is compact, then it is convex and homeomorphic to a ball.
\end{lemma}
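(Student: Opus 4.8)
The plan is to realize $M$ as a radial graph over the unit sphere of a tangent space at an interior point, in the spirit of the exponential chart on a Cartan-Hadamard manifold. Since $k\le 0$, the manifold $M$ is a $\CAT(k_{\le 0})$, hence a $\CAT(0)$, space; in particular it is uniquely geodesic, it is contractible, and every local geodesic of $M$ is globally minimizing. Fix $p\in\inte(M)$, and for a unit vector $v\in T_pM$ let $\gamma_v$ be the Riemannian geodesic with $\gamma_v(0)=p$ and $\gamma_v'(0)=v$; put
$$
r(v):=\sup\{\,t\ge 0:\gamma_v([0,t])\subset M\,\}.
$$

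First I would check that $r(\cdot)$ is a well-defined continuous function on the unit sphere of $T_pM$ with values in $(0,\infty)$. For $t<r(v)$ the arc $\gamma_v|_{[0,t]}$ lies in $\inte(M)$, where $M$ is locally Riemannian, so it is a local geodesic of the $\CAT(0)$ space $M$ and hence minimizing; thus $t=|p\,\gamma_v(t)|_M\le\operatorname{diam}(M)<\infty$, whence $r(v)\le\operatorname{diam}(M)$. By completeness the limit $\gamma_v(r(v)):=\lim_{t\to r(v)}\gamma_v(t)$ exists in $M$, and it lies on $\partial M$, since otherwise $\gamma_v$ would extend past it inside $\inte(M)$, contradicting maximality of $r(v)$. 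Continuity of $r$ is then soft: extend $M$ slightly to an open Riemannian manifold $\hat M$ so that each $\gamma_{v'}$ is defined past $\partial M$ and depends continuously on $v'$. If $v_i\to v$ and $r(v_i)\to s$, then $\gamma_{v_i}(r(v_i))\in\partial M$ converges to $\gamma_v(s)$, which therefore lies in the closed set $\partial M$, forcing $s\ge r(v)$; while for each $s'<s$ and $i$ large, $\gamma_{v_i}([0,s'])\subset M$, so $\gamma_v([0,s'])\subset M$, forcing $r(v)\ge s$. Hence $r(v_i)\to r(v)$.

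Now set $B:=\{\,tv:v\in T_pM,\ |v|=1,\ 0\le t\le r(v)\,\}\subset T_pM$. Since $r$ is continuous and positive, $B$ is compact and the map $tv\mapsto(t/r(v))v$ is a homeomorphism from $B$ onto the closed unit ball. The map $\Phi\colon B\to M$, $\Phi(tv):=\gamma_v(t)$, is continuous and injective (two geodesics of the $\CAT(0)$ space $M$ issuing from $p$ that meet again must have the same initial direction), and it is surjective: for $q\in M$ the unique geodesic $pq$ of $M$ stays in $\inte(M)$ away from its endpoints, because local convexity of $\partial M$ makes the distance-to-$\partial M$ function concave near $\partial M$ and so no interior point of $pq$ can lie on $\partial M$; hence $pq$ is a minimizing Riemannian geodesic from $p$, i.e.\ $pq=\gamma_v|_{[0,|pq|_M]}$ with $|pq|_M\le r(v)$. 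A continuous bijection from a compact space onto a Hausdorff space is a homeomorphism, so $M$ is homeomorphic to $B$ and hence to the closed unit ball. Convexity of $M$ is immediate, as $M$ is a geodesic space and therefore contains the geodesic joining any two of its points; the radial description moreover identifies $M$ with the convex body bounded by the hypersurface $\partial M$. The one point that needs care is the claim used above that the geodesic between two interior points avoids $\partial M$: this rests on the local convexity of the boundary, which yields concavity of the distance-to-$\partial M$ function near $\partial M$, together with the $\CAT(0)$ structure; the finiteness and continuity of $r$, by contrast, are purely soft.
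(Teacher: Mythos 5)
Your proof is correct and follows essentially the same strategy as the paper's: realize $M$ as a radial chart over the unit sphere of $T_pM$ via the exponential map based at an interior point. You supply considerably more detail in constructing the star-shaped domain $B$, verifying continuity of the radial function $r$, and checking that $\Phi$ is a continuous bijection. The one genuine difference is the treatment of the key geometric fact — that geodesics issuing from an interior point remain in $\inte(M)$ until they first hit $\partial M$, and that the $\CAT(0)$ geodesic $pq$ is a Riemannian geodesic. The paper proves convexity of $\inte(M)$ first, using a continuity-of-geodesics argument together with the fact (cited to Bishop) that a geodesic tangent to a locally convex boundary stays in the boundary; you instead argue via concavity of the distance-to-$\partial M$ function near $\partial M$. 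These are two standard packagings of the same underlying phenomenon, and either suffices. One small caveat: you assert early on that $\gamma_v([0,t])\subset\inte(M)$ for $t<r(v)$, which is not immediate from the definition of $r$ as a supremum of times for which $\gamma_v([0,t])\subset M$; this, like the surjectivity step, requires the same boundary argument (a Riemannian geodesic from $p\in\inte(M)$ that touches $\partial M$ while remaining in $M$ is tangent there, hence forced into $\partial M$, contradicting $p\in\inte(M)$). Also, "concave along the $\CAT(0)$ geodesic $pq$" is most cleanly justified after one knows $pq$ is $C^1$ and coincides with a Riemannian geodesic where it is interior — which is what is being proved — so the argument is best phrased as ruling out an interior touching time $t_0$ by working on the maximal subinterval $[0,t_0)$ where $\gamma$ is interior and hence a smooth Riemannian geodesic, exactly as in the paper's tangency argument. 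These are presentation points, not gaps in the idea.
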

\begin{proof}
Let $p_0$, $p_1\in\inte( M )$ be points in the interior of $ M $, and $\gamma\colon[0,1]\to \inte( M )$ be a curve with $\gamma(0)=p_0$, $\gamma(1)=p_1$. Let $\ol t\in[0,1]$ be the supremum of $t\in [0,1]$ such that $p_0\gamma(t)\subset\inte(M)$. If $\ol t\neq 1$, then $p_0\gamma(\ol t)$ must be tangent to $\partial M $; therefore, it lies  in $\partial M $ due to local convexity of $\partial M$  \cite{bishop1974}, which is a contradiction. So $\inte( M )$ is convex, which yields that $M$ is convex. Now the exponential map based at an interior point of $ M $ yields a homeomorphism between $ M $ and a star-shaped domain in $\R^n$.
\end{proof}

Now we establish the intrinsic version of Theorem \ref{thm:main}:

\begin{proof}[Proof of Theorem \ref{thm:main2}]
Let $\Gamma_i$ denote the components of $\Gamma$.
Since $\Gamma_i$ is simply connected,   there exits an isometric embedding $f\colon\Gamma_i\to\Gamma'_i\subset\M^n_k$ preserving $\ff_{\Gamma_i}$, by Proposition \ref{prop:embedding}. By do Carmo-Warner's theorem \cite[Sec. 5]{docarmo-warner1970}, $\Gamma_i'$  is convex, which yields that  $\ff_{\Gamma_i}$ is positive semidefinite with respect to some normal vector field. By assumption, $\ff_{\Gamma_i}$ has a positive eigenvalue at some point with respect to the outward normal $N$. So $\Gamma_i$ must be locally convex with respect to $N$. Hence $M$ is a $\CAT^n(k_{\leq 0})$ manifold. Thus $\Gamma$ is connected by Lemma \ref{lem:ball}, or $\Gamma_i=\Gamma$, and $M$ is a convex body (as a subset of itself).  Let $M'$ be the convex body in $\M^n_k$ bounded by $\Gamma'=f(\Gamma)$. Then $f$ is an isometry between  boundaries of $M$ and $M'$. So $M$ and $M'$ are isometric by Proposition \ref{prop:main}.
\end{proof}

\begin{note}
In the application of Lemma \ref{lem:sacksteder} in  Theorem \ref{thm:main} we could have used the fact that $\Gamma$ is \emph{strictly convex}  at one point, since it is compact. This would quickly resolve the case of $\R^n$ in Lemma \ref{lem:sacksteder}, because it would  force $\Gamma$ to be convex by Sacksteder's theorem \cite{sacksteder1960}; however,  there are complete surfaces in $\mathbf{H}^3$ which are infinitesimally convex, and are strictly convex at one point, but are not convex \cite[p. 84]{spivak:v4}.
\end{note}

\begin{note}
Once convexity of $\Gamma$ and $\Gamma'$ in the above arguments has been established,  one may glue the complement of the convex body bounded by $\Gamma'$ to the convex body bounded by $\Gamma$ to obtain a geodesically complete $\CAT(k)$ space $\mathcal{X}$ \cite{kosovskii2004}; however, $\mathcal{X}$ may not be a smooth Riemannian manifold  a priori. If a gap theorem \cites{schroeder-ziller1990,seshadri2009,greene-wu1982,ballmann-gromov-schroeder}
can be generalized to singular spaces to ensure that $\mathcal{X}$ has constant curvature, it would yield an alternative approach to the results above.
\end{note}

%%%%%%%%%%%%%%%%%%%%%%%%%%%%%%%%%%%%%%%%%
\section{Total Absolute Curvature}\label{sec:higher}
%%%%%%%%%%%%%%%%%%%%%%%%%%%%%%%%%%%%%%%%%

Here we establish an analogue of Corollary \ref{cor:main2} for surfaces of genus $g\geq 1$. Recall that $\Gamma_0$ denotes the boundary of the convex hull of $\Gamma$.

\begin{proposition}\label{prop:high-g}
Let $\Gamma$ be a closed $\C^{1,1}$ surface immersed in a $\CAT^3(k_{\leq 0})$ manifold $M$. Then
\begin{equation}\label{eq:higher}
\tilde{\mathcal{G}}(\Gamma)\geq 4\pi -k|\Gamma_0|,
\end{equation}
with equality only if $K_{ M }\equiv k$ on support planes of $\Gamma_0$, and $GK_\Gamma\geq 0$ everywhere.
\end{proposition}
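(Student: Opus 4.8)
The plan is to reduce Proposition \ref{prop:high-g} to Corollary \ref{cor:main2} by passing to the boundary $\Gamma_0$ of the convex hull of $\Gamma$, which is an embedded sphere, and then arguing that the total absolute curvature can only decrease under this operation. First I would recall that in a $\CAT^3(k_{\leq 0})$ manifold the convex hull $\conv(\Gamma)$ of the compact set $\Gamma$ is a compact convex body, whose boundary $\Gamma_0$ is a locally convex topological sphere; since $\Gamma$ is $\C^{1,1}$, a standard argument (the convex hull boundary inherits $\C^{1,1}$ regularity from the pieces of $\Gamma$ it touches, while the remaining part is ruled/flat) shows $\Gamma_0$ is $\C^{1,1}$, so its Gauss-Kronecker curvature $GK$ is defined a.e. and $\ff_{\Gamma_0}\geq 0$. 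Applying the $\C^{1,1}$ version of Corollary \ref{cor:main2} (guaranteed by Note \ref{note:GC} and Note \ref{note:schur}, or proved directly since $\Gamma_0$ is convex and hence no smoothness is lost in Gauss-Bonnet) to $\Gamma_0$ gives
\be\label{eq:hull-bound}
\tilde\G(\Gamma_0)=\G(\Gamma_0)=\int_{\Gamma_0}GK = 4\pi-\int_{\Gamma_0}K_M(T_p\Gamma_0)\geq 4\pi-k|\Gamma_0|,
\ee
using $GK\geq 0$ on $\Gamma_0$ and $K_M\leq k\leq 0$.

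The crux is then the inequality $\tilde\G(\Gamma)\geq \tilde\G(\Gamma_0)$, i.e., that wrapping $\Gamma$ inside its convex hull does not increase total absolute curvature. In $\R^n$ this is classical (Chern--Lashof, via the fact that the Gauss map of $\Gamma_0$ is ``dominated'' by that of $\Gamma$: every support hyperplane of $\conv(\Gamma)$ is a support hyperplane touching $\Gamma$ at a point where the second fundamental form is positive semidefinite, and one compares the measures of outer normals). The way I would run it intrinsically is via the identity $\tilde\G(\Gamma)=\int_{\Gamma^+}GK - \int_{\Gamma^-}GK\geq \int_{\Gamma^+}GK$, where $\Gamma^+$ is the set where $GK\geq 0$; then I would show $\int_{\Gamma^+}GK\geq \int_{\Gamma_0}GK$ by a Gauss-map / area comparison. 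Concretely: for $\Gamma_0$, $\int_{\Gamma_0}GK = \int_{\Gamma_0}(K_{\Gamma_0}-K_M(T\Gamma_0))$; because $\Gamma_0$ bounds a convex body, the generalized Gauss map $p\mapsto$ (outward unit normal of the supporting geodesic hyperplane) is surjective onto the unit normal directions, and $\int_{\Gamma_0}|GK|$ equals (a suitable normalization of) the measure of this Gauss image counted once. The same support-hyperplane family, restricted to $\Gamma$, meets $\Gamma$ at points of $\Gamma^+$, and there the local Gauss map of $\Gamma$ covers the same set of normal directions with multiplicity $\geq 1$; hence $\int_{\Gamma^+}GK\geq \int_{\Gamma_0}GK$. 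Alternatively, and perhaps more cleanly, I would invoke the already-proven majorization/comparison machinery: $\Gamma_0$ is the boundary of $\conv(\Gamma)$ inside the $\CAT(k)$ space $M$, and there is a nearest-point (hence nonexpanding) retraction $M\to\conv(\Gamma)$ restricting to a $1$-Lipschitz map $\Gamma\to\Gamma_0$; combined with the area and Gauss-Bonnet bookkeeping this yields $\tilde\G(\Gamma)\geq\tilde\G(\Gamma_0)$.

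Chaining $\tilde\G(\Gamma)\geq\tilde\G(\Gamma_0)\geq 4\pi-k|\Gamma_0|$ gives \eqref{eq:higher}. For the equality case, equality must hold throughout \eqref{eq:hull-bound}: so $\int_{\Gamma_0}K_M(T_p\Gamma_0)=k|\Gamma_0|$ with $K_M\leq k$ forces $K_M(T_p\Gamma_0)=k$ for a.e.\ $p\in\Gamma_0$, i.e., $K_M\equiv k$ on the tangent (support) planes of $\Gamma_0$; and equality in $\tilde\G(\Gamma)\geq\int_{\Gamma^+}GK$ forces $\int_{\Gamma^-}GK=0$, i.e., $GK_\Gamma\geq 0$ almost everywhere, which for $\C^{1,1}$ (hence $\ff_\Gamma$ defined a.e.) is the asserted conclusion.

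The main obstacle I expect is the rigorous proof of the monotonicity $\tilde\G(\Gamma)\geq\tilde\G(\Gamma_0)$ in the Riemannian (rather than Euclidean) setting, and doing so at only $\C^{1,1}$ regularity: one must control the regularity of $\Gamma_0=\partial\conv(\Gamma)$, make sense of $GK_{\Gamma_0}$ and the Gauss-Bonnet formula for a merely $\C^{1,1}$ convex hull boundary, and make the Gauss-map domination argument work when there is no ambient linear structure. The convexity of $\conv(\Gamma)$ and the nonexpanding nearest-point projection in $\CAT(k_{\leq 0})$ spaces are the tools that substitute for the Euclidean Gauss map, but matching up the measures precisely, and handling the ruled/flat part of $\Gamma_0$ where $GK_{\Gamma_0}=0$, is the delicate step.
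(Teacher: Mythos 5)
Your overall plan — reduce to the convex hull boundary $\Gamma_0$ and then apply the Gauss-Bonnet bookkeeping to $\Gamma_0$ — is the right one and matches the paper's strategy in broad outline, but both of the steps you flag as ``delicate'' are in fact genuine gaps, and the paper resolves them quite differently from what you sketch.

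First, the claim that $\Gamma_0$ is $\C^{1,1}$ by a ``standard argument'' is not available here. In $\R^n$ the convex hull of a $\C^{1,1}$ hypersurface is indeed $\C^{1,1}$ (the paper cites this), but in a Cartan-Hadamard manifold the regularity of $\partial\conv(\Gamma)$ is an open problem — the paper explicitly notes in the remark following this proposition that \emph{if} one could show $\Gamma_0$ is $\C^{1,1}$ (together with a $\C^{1,1}$ version of Theorem \ref{thm:main}), then Gromov's problem would be fully resolved. Precisely because this is not known, the paper does not work directly with $\Gamma_0$: it works with the outer parallel surfaces $\Gamma_0^\epsilon$, which \emph{are} $\C^{1,1}$ (\cite[Lem.~2.6]{ghomi-spruck2022}), defines $\G(\Gamma_0)$ as the limit of $\G(\Gamma_0^\epsilon)$ as $\epsilon\to 0$ using a known monotonicity, and runs Gauss-Bonnet on $\Gamma_0^\epsilon$. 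Your derivation of the bound \eqref{eq:hull-bound} therefore cannot start from Gauss-Bonnet on $\Gamma_0$ itself without first supplying a definition of $\G(\Gamma_0)$; the parallel-surface limit is what provides that.

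Second, the monotonicity $\tilde\G(\Gamma)\geq\G(\Gamma_0)$ is the heart of the matter, and your Gauss-map domination sketch is essentially a Euclidean argument: it needs a global space of ``unit normal directions'' onto which the support-hyperplane Gauss map is surjective and whose pullback area integrates $|GK|$, and this structure is not available in a general Cartan-Hadamard manifold. Your alternative via the nonexpanding nearest-point retraction $M\to\conv(\Gamma)$ gives area monotonicity $|\Gamma|\geq|\Gamma_0|$, but that does not yield a curvature-integral comparison. What the paper actually uses is the chain $\tilde\G(\Gamma)\geq\G_+(\Gamma)\geq\G(\Gamma\cap\Gamma_0)=\G(\Gamma_0)$, where the first two steps come from $\Gamma\cap\Gamma_0\subset\Gamma_+$ (at points where $\Gamma$ touches $\partial\conv(\Gamma)$, the second fundamental form is positive semidefinite), and the crucial middle \emph{equality} $\G(\Gamma\cap\Gamma_0)=\G(\Gamma_0)$ is a nontrivial result of Kleiner \cite{kleiner1992} (see \cite[Prop.~6.6]{ghomi-spruck2022}). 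This is the ingredient your proposal is missing; without it, the inequality you want between $\int_{\Gamma^+}GK$ and $\G(\Gamma_0)$ does not follow from the retraction alone. The equality analysis you give at the end is fine once these two gaps are filled, and in fact closely parallels the paper's.
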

\begin{proof}
Let $\Gamma_0^\epsilon$ denote the outer parallel surface of $\Gamma_0$ at distance $\epsilon>0$. Then $\Gamma_0^\epsilon$ is $\C^{1,1}$ \cite[Lem. 2.6]{ghomi-spruck2022} and thus by Rademacher's theorem its total curvature $\mathcal{G}(\Gamma_0^\epsilon)$ is well-defined. The total curvature of $\Gamma_0$ is defined as
$$
\mathcal{G}(\Gamma_0)
:=
\lim_{\epsilon\to 0}\mathcal{G}(\Gamma_0^\epsilon).
$$ 
It is known that $\epsilon\mapsto \mathcal{G}(\Gamma_0^\epsilon)$ is a monotone function  which does not increase as $\epsilon\to 0$ \cite[Sec. 6]{ghomi-spruck2022}. 
Furthermore $\mathcal{G}(\Gamma_0^\epsilon)\geq 0$ since $\Gamma_0^\epsilon$ is convex, due to the fact that distance from a convex set in a $\CAT^n(0)$ manifold is a convex function \cite[Cor. 2.5]{bridson-haefliger1999}.
Thus $\mathcal{G}(\Gamma_0)$ exists.
By  \eqref{eq:gauss} and Gauss-Bonnet theorem,
\begin{equation}\label{eq:G}
\mathcal{G}(\Gamma_0^\epsilon)=\int_{\Gamma_0^\epsilon} K_{\Gamma_0^\epsilon}-\int_{\Gamma_0^\epsilon}K_{ M }(T_p\Gamma_0^\epsilon)
\geq
4\pi -k|\Gamma_0^\epsilon|
\geq
4\pi -k|\Gamma_0|.
\end{equation}
Here we have also used the fact that $|\Gamma_0^\epsilon|\geq|\Gamma_0|$, which holds since projection by the nearest point mapping into a convex set is nonexpanding \cite[Cor. 2.5]{bridson-haefliger1999}.
So $\mathcal{G}(\Gamma_0)\geq 4\pi-k|\Gamma_0|$.
Let $\mathcal{G}_+(\Gamma):=\int_{\Gamma_+}GK_\Gamma$, where $\Gamma_+\subset\Gamma$ is the region with $GK_\Gamma\geq 0$. Then \begin{equation}\label{eq:tildeGK}
\tilde{\mathcal{G}}(\Gamma)\geq\mathcal{G}_+(\Gamma)\geq\mathcal{G}(\Gamma\cap\Gamma_0)
=
\mathcal{G}(\Gamma_0)
\geq 
4\pi-k|\Gamma_0|,
\end{equation}
where the middle equality  is due to Kleiner \cite{kleiner1992}, see  \cite[Prop. 6.6]{ghomi-spruck2022}.  
If  equality holds in \eqref{eq:higher}, then equalities hold in \eqref{eq:tildeGK}. In particular $\mathcal{G}(\Gamma_0)
= 4\pi-k|\Gamma_0|$, which yields $\mathcal{G}(\Gamma_0^\epsilon)\to 4\pi-k|\Gamma_0|$, as $\epsilon\to 0$. So \eqref{eq:G} implies that $\int_{\Gamma_0^\epsilon}K_{ M }(T_p\Gamma_0^\epsilon)\to k|\Gamma_0|$. Since $K_ M \leq k$, it follows that
$K_{ M }(T_p\Gamma_0^\epsilon)\to k$. But $T_p\Gamma_0^\epsilon$ converge to support planes of $\Gamma_0$. Consequently, $K_{ M }\equiv k$ on support planes of $\Gamma_0$. Finally, equalities in \eqref{eq:tildeGK} include  $\tilde{\mathcal{G}}(\Gamma)=\mathcal{G}_+(\Gamma)$, which yields $GK_\Gamma\geq 0$. 
\end{proof}

\begin{note}
It is unknown whether closed surfaces  with $GK\geq 0$ in a $\CAT^3(k_{\leq 0})$  manifold are convex \cite[Rem. 4]{alexander1977}; otherwise, Proposition \ref{prop:high-g}  would imply via Theorem \ref{thm:main} that $\Gamma$ bounds a $k$-flat convex body, and solve Gromov's problem in all cases. 
\end{note}

\begin{note}
If Theorem \ref{thm:main} holds for $\C^{1,1}$ hypersurfaces (see Notes \ref{note:GC} and \ref{note:schur}), and one can show that $\Gamma_0$ is $\C^{1,1}$, then Proposition \ref{prop:high-g} solves Gromov's problem in all cases. In $\R^n$ it is already known that the convex hull of a closed $\C^{1,1}$ hypersurface is $\C^{1,1}$ \cite[Note 6.8]{ghomi-spruck2022}. See also \cite{lytchak-petrunin2022,borbely1995} for regularity properties of convex hulls in Riemannian manifolds.
\end{note}

\section*{Acknowledgments}
The author is grateful to Anton Petrunin for important input and feedback during this work.
Thanks also to Stephanie Alexander, Werner Ballmann,  Igor Belegradek, Misha Gromov, Joe Hoisington, and Gil Solanes for useful communications.

\bibliography{references}

\end{document}